\newcommand{\F}{\mathbb{F}}
\newcommand{\Z}{\mathbb{Z}}
\newcommand{\C}{\mathbb{C}}
\newcommand{\Q}{\mathbb{Q}}
\newcommand{\pr}{\mathbb{P}}
\newcommand{\p}{\mathfrak{p}}
\newcommand{\q}{\mathfrak{q}}
\newcommand{\I}{\mathfrak{I}}
\newcommand{\Aut}{\operatorname{Aut}}
\newcommand{\GL}{\operatorname{GL}}
\newcommand{\Gal}{\operatorname{Gal}}
\newcommand{\Eva}{\operatorname{G}}
\newcommand{\Hom}{\operatorname{Hom}}
\newcommand{\gon}{\operatorname{gon}}
\newtheorem{thm}{Theorem}
\newtheorem{lemma}{Lemma}
\newtheorem{prop}{Proposition}
\newtheorem{corollary}{Corollary}
\newtheorem{remark}{Remark}
\newtheorem{definition}{Definition}
\begin{document}
\title{On the Gonality of Certain Quotient Varieties}
\author{Jonah Leshin}
\maketitle

\begin{abstract}  
Noether's problem asks whether, for a given field $K$ and finite group
$G$, the fixed field $L:=K(x_h:h \in G)^G$ is a purely transcendental extension of
$K$, where $G$ acts on the $x_h$ by $g \cdot x_h=x_{gh}$. The field
$L$ is naturally the function field for a quotient variety $V:=V(K,G)$. In analogy to the case of curves, we define the
\textit{gonality} of $V$ to be the minimal degree of a dominant
rational map from $V$ to
projective space, which, in a sense, measures the extent to which $L$ may fail to
be purely transcendental over $K$. When $G$ is abelian, we give bounds for the gonality of $V(K,G)$. 
\end{abstract}

\section{Introduction}\label{intro}
The inverse Galois problem for a field $K$ and a finite group $G$ asks
whether there exists a Galois extension $L/K$ with group
$G$. We can embed $G$ in $\GL_n(K)$ for some $n$, so $G$ acts
faithfully on $V=K^n$, and
there is a faithful action of $G$ on the field $K(V)=K(x_1, \ldots,
x_n)$. If the fixed field $K(V)^G$ is a purely
transcendental extension of $K$ (of transcendence degree $n$), then as a consequence of Hilbert Irreducibility, we obtain the existence of a
field (infinitely many fields, in fact) $L/K$ with
$\Eva(L/K)=G$. 
\par A purely transcendental extension $F$ of $K$ is said to be $rational$ over $K$. Given a finite group $G$ and field $K$, consider the regular representation $V_{G}:=\langle x_{g}\rangle_{g \in
  G}$ of $G$ over $K$. Noether's problem asks whether $K(G):=K(V_{G})^G$ is
rational over $K$. An affirmative answer to Noether's problem for $G$
and $K$ implies an affirmative answer to the inverse Galois problem for
$G$ and $K$. Swan was the first to give an example of a group $G$ and
field $K$ for which Noether's problem had a negative answer. He proved
\cite{Swan} that $\Q(\Z/47\Z)$ is not rational over $\Q$ by showing
that Noether's problem for $\Z/p\Z$ was
equivalent to asking whether a prime ideal above $p$ in
$\Z[\zeta_{p-1}]$ is principal, where $\zeta_{p-1}$ is a primitive
$(p-1)$st root of unity. Building on
Swan's work, Lenstra
gave necessary and sufficient conditions under which Noether's problem
has an affirmative answer for all finite abelian groups over any field
\cite{Lenstra}. Saltman pioneered the study of Noether's problem over
the complex numbers, giving an example of a group $G$ of order $p^9$ for
any prime $p\neq 2$ for which $\C(G)$ is not rational \cite{Saltman}. A good deal of
work using techniques ranging from explicit computation to
Galois cohomology and spectral sequences has been done on various
cases of Noether's problem over algebraically closed fields, including
$p$-groups \cite{Peyre1, Kang}, direct products and wreath products \cite{KangRed},
and the alternating groups
$A_n$ \cite{Maeda}; however, there remain many open cases, such as the
rationality of $\C(A_6)$. 

\par Our aim in this paper is to measure the extent to which $K(A)$ may
fail to be rational for an arbitrary finite abelian group $A$ and
field $K$. To this end, we introduce the following quantity.

\begin{definition}
Let $V$ be a variety of dimension $n$ over a field $K$. The
\textit{gonality} of $V$ $\gon(V)$ is the minimal degree of a dominant
rational map $V \dashrightarrow \pr ^n$.
\end{definition}
In the case that $V$ is a curve, $\gon(V)$ is just the standard
gonality of $V$, a quantity that has been studied extensively and
about which many questions remain. The gonality of
hypersurfaces has also been studied \cite{hypergon}.

\par In our case, we fix a finite abelian group $A$ and a field
$K$, and let $V=V_{K,A}$ be the variety (up to birational equivalence) with function field $K(A)$. That is,
$V=\mathbb{A}^{|A|}_{K}/A$. 

\begin{definition}
For a field $K$ and a finite group $G$, define
$\gon(K,G)=\gon(V_{K,G})$. 
\end{definition}
An equivalent definition of $\gon(K,G)$, with which we will work primarily, is
\[ \gon(K,G)= \min_{K\subseteq L\subseteq K(G)}\{[K(G):L]: L/K \textrm{ is rational} \}.
\]
That is, $\gon(K,G)$ is the minimum degree of $K(G)$ over any
field that is rational over $K$. Since our original field
$K(h:h\in G)$ is finitely generated over $K$, $K(G)$ is finitely generated
as well, so a transcendence basis $S$ for $K(G)$ is finite. Since
$K(G)$ is finitely generated and algebraic over $K(S)$, it is
finite over $K(S)$. Therefore, the quantity $\gon(K,G)$ is well
defined. For example,
Swan's result that $\Q(\Z/47\Z)$ is not rational may be written as
$\gon(\Q,\Z/47\Z)\geq 2$. For any finitely generated field $E$ over $F$, we can
similarly define the gonality $\gon(E)$ of $E$ to be the minimum over all
transcendence bases $S$ for $E/F$ of $[E:F(S)]$.

\par  In the case $G=\Z/p\Z$, we may embed $G$ in the
symmetric group $S_p$, which acts naturally on $W:=K^p$ by permuting
coordinates. The algebraic independence of the elementary symmetric
polynomials gives the rationality of $K(W)^{S_p}$. The existence of
this  rational
subfield of $K(G)$ implies $\gon(K,G) \leq \nobreak (p-1)!$ 
\par Our main
theorem is:

\begin{thm}\label{mainupper}
Let $A$ be a finite abelian group and let $K$ be a field such that
$[K(\zeta_s):K]$ is cyclic for every 
prime power $s$ that divides the order of $A$ and is prime to the
characteristic of $K$. Then $\gon(K,A)$ is less
than an explicit quantity, which is given in the statement of
Theorem \ref{GenAb}.
\end{thm}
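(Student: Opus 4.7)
The plan is to exhibit an explicit subfield $L \subseteq K(A)$ that is purely transcendental over $K$ and bound $[K(A):L]$ from above; by the second definition of $\gon(K,A)$ given in the introduction, such a bound is exactly a gonality estimate. The first reduction is to the case where $A$ is a $p$-group. Writing $A = \prod_p A_p$ as the direct sum of its $p$-primary components, the regular representation factors as a tensor product $V_A = \bigotimes_p V_{A_p}$, so rational subfields constructed for each $K(A_p)$ can be combined to yield one for $K(A)$. The cyclicity hypothesis on $K(\zeta_s)/K$ is stated one prime power at a time, so this reduction preserves the hypothesis and a prime-by-prime bound multiplies to the overall bound.

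For a fixed abelian $p$-group $A_p$ of exponent $p^m$ (with $p$ coprime to $\operatorname{char} K$), I would pass to the splitting field $E := K(\zeta_{p^m})$ and diagonalize the regular representation via the eigenvector basis $y_\chi := \sum_{g \in A_p} \chi(g) x_g$ for $\chi \in \hat{A}_p$. On this basis $A_p$ acts by characters, $g \cdot y_\chi = \chi(g)^{-1} y_\chi$, and the cyclic Galois group $\Gamma := \Gal(E/K)$ acts on $\hat{A}_p$ through its natural embedding into $(\Z/p^m\Z)^\times$. For each $\Gamma$-orbit $O \subset \hat{A}_p$, the elementary symmetric functions of $\{y_\chi : \chi \in O\}$ descend to $K$, and certain monomials $\prod_\chi y_\chi^{e_\chi}$ with $\sum_\chi e_\chi \chi = 0$ in $\hat{A}_p$ are $A_p$-invariant; averaging the latter over $\Gamma$ produces elements of $K(A_p)$. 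I would assemble $L$ orbit by orbit from such elements, taking a maximal algebraically independent collection.

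The main obstacle is the quantitative part: verifying algebraic independence of the chosen $K$-rational $A_p$-invariants and computing $[K(A_p):L]$ precisely. The cyclicity hypothesis is essential here because it forces each $\Gamma$-orbit in $\hat{A}_p$ to be a single cyclic block and makes the stabilizer of each $\chi$ again cyclic, so the orbit-wise invariants take a controllable shape. One natural way to track the degree is to work in the tower $K \subseteq L \subseteq K(A_p) \subseteq E(V_{A_p})^{A_p}$; since the $A_p$-action on the $y_\chi$ is monomial, the top field is manifestly rational over $E$, and comparing it against $E \cdot L$ together with the degree $[E:K]$ yields $[K(A_p):L]$ in terms of orbit sizes and the arithmetic of $(\Z/p^m\Z)^\times$. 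Combining these contributions across all primes dividing $|A|$, and multiplying by the tensor-product reduction, produces the explicit quantity recorded in Theorem \ref{GenAb}.
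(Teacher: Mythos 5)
There is a genuine gap: your proposal never actually produces the explicit quantity that the theorem asserts, and the mechanism you sketch cannot produce it. The bound in Theorem \ref{GenAb} is $\prod_{s\in\Omega'}\min|\mathfrak{a}_s:\I_s|$, where $\mathfrak{a}_s$ is a specific (generally non-principal) prime ideal of $\Z[\zeta_{m_s}]$ and the minimum is over principal ideals $\I_s\subseteq\mathfrak{a}_s$. These ideals arise only after one describes the $A$-invariant field over $E=K(\zeta_e)$ \emph{multiplicatively}: as in Fischer's theorem, $E(V_A)^{A}=E(M)$ for a lattice $M$ of invariant monomials in the eigencoordinates $y_\chi$, on which $\Gamma=\Gal(E/K)$ acts; Lenstra's functors $F_m$ then decompose $M$ (up to the permutation-module summands) into the cyclotomic ideals $\mathfrak{a}_s$, and the whole content of the upper bound is the move of shrinking $\mathfrak{a}_s$ to a principal subideal $\I_s$ — which yields a $\Gamma$-sublattice whose fixed field is rational over $K$ — together with the index computation $[K(M):K(N)]=|M:N|$ (Lemma \ref{M}) identifying the degree as $|\mathfrak{a}_s:\I_s|$. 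Your construction bypasses this structure: elementary symmetric functions of $\Gamma$-orbits of the $y_\chi$ and additive $\Gamma$-averages of invariant monomials are sums, not monomials, so they do not sit inside the multiplicative lattice where the obstruction (the ideal class of $\mathfrak{a}_s$) lives; there is no mechanism in your sketch for proving the resulting field is rational over $K$, nor for computing $[K(A_p):L]$, and you acknowledge this yourself ("the main obstacle is the quantitative part"). Without that step you at best recover something like the crude symmetric-function bound $(|A|-1)!$ mentioned in the introduction, not the quantity of Theorem \ref{GenAb}.

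Two further points would also need repair even if the core were fixed. First, the reduction to $p$-primary components via $V_A=\bigotimes_p V_{A_p}$ is not enough as stated: invariant fields of tensor products are not simply related to those of the factors, and degrees of rational subfields do not obviously multiply; the paper avoids this by using Lenstra's results, which treat all elementary divisors at once through $I=\oplus_{s\in\Omega'}J_s$ and reduce $K(A)$ to a purely transcendental extension of $L(I)^{\pi}$. Second, you quietly drop the part of $A$ of order divisible by $\operatorname{char}K$; the paper handles it because Lenstra shows that summand contributes only a rational extension. As written, then, the proposal is a plausible opening (diagonalize over $K(\zeta_e)$, descend via the cyclic Galois action, bound the degree of a rational subfield) but it is missing the decisive ideas — the lattice/ideal description via $F_m$, the passage from $\mathfrak{a}_s$ to a principal subideal, and Lemma \ref{M} — that turn the plan into the stated bound.
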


In addition to Theorem \ref{mainupper}, we discuss conditional lower
bounds for the gonality of a field over which
$K(A)$ is rational.
 \begin{definition}
If $E$ is a transcendental field extension of $F$ and $S$ is a
transcendence basis for $E/F$ such that $[E:F(S)]=\gon(E)$, then we
will call $S$ a \emph{maximal transcendence basis} for $E$ (over
$F$). 
\end{definition}
\par For an abelian group $A$ and an algebraically
closed field of characteristic prime to $|A|$, Noether's problem is
known to have an affirmative answer, due to Fischer
\begin{thm} \textup{\cite{Bryn}}
Let $A$ be an abelian group of exponent $e$ and $K$ a field of
characteristic prime to $e$ that contains
the $e$th roots of unity $\mu_e$. Then $K(A)$ is rational.
\end{thm}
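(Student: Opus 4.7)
The plan is to diagonalize the regular representation and then exhibit a transcendence basis for $K(A)$ consisting of invariant Laurent monomials in the diagonalizing coordinates. Throughout let $N=|A|$ and $e$ the exponent of $A$.

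Since $A$ is abelian and $K$ contains $\mu_e$, every irreducible $K$-representation of $A$ is a one-dimensional character, and the regular representation decomposes as $V_A\cong\bigoplus_{\chi\in\widehat{A}}K\,y_\chi$ with $g\cdot y_\chi=\chi(g)y_\chi$. Working in these diagonalizing coordinates, $K(V_A)=K(y_\chi:\chi\in\widehat{A})$, and a Laurent monomial $y^{n}:=\prod_\chi y_\chi^{n_\chi}$ is $A$-invariant if and only if $\prod_\chi \chi^{n_\chi}=1$ in $\widehat{A}$. Let $\phi\colon\Z^{\widehat{A}}\to\widehat{A}$ be this surjective homomorphism and $M:=\ker\phi$; then $M$ is a sublattice of $\Z^{\widehat{A}}$ of full rank $N$ and index $N$.

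Next I would choose any $\Z$-basis $m_1,\dots,m_N$ of $M$ and set $u_i:=y^{m_i}$. The algebraic independence of $u_1,\dots,u_N$ over $K$ follows because the matrix $(m_1,\dots,m_N)$ is invertible over $\Q$: any polynomial relation among the $u_i$ would, after taking logarithms of monomial exponents, yield a dependence among rationally independent vectors. Hence $K(u_1,\dots,u_N)$ has transcendence degree $N$ over $K$, i.e.\ is rational of the correct dimension.

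The final step is the containment $K(V_A)^A\subseteq K(u_1,\dots,u_N)$. For this I would observe that any rational function in $K(V_A)$ is a sum of $\widehat{A}$-isotypic components under the diagonal torus action (obtained by multiplying numerator and denominator to express it as $f/g$ with $g$ a product of monomials), and the $A$-invariant rational functions are precisely $K$-linear combinations of invariant Laurent monomials, that is, elements of $K[M^{\pm 1}]$ localized. Since $m_1,\dots,m_N$ $\Z$-generate $M$, every element $y^m$ with $m\in M$ is a Laurent monomial in $u_1,\dots,u_N$, giving the desired equality $K(V_A)^A=K(u_1,\dots,u_N)$ and hence rationality.

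The only real subtlety is step three: one must know $M$ is free abelian of rank $N$ (clear, since $M\subseteq\Z^{\widehat{A}}$ has finite index), and one must verify that invariants of $K(V_A)$ really are spanned by invariant monomials rather than picking up extra relations over $K$. That verification is clean because the torus $\mathrm{Spec}\,K[\Z^{\widehat{A}}]$ acts semisimply, so $A$-invariants of $K(V_A)$ are obtained by intersecting with the $A$-invariant part of $K[\Z^{\widehat{A}}][(\prod y_\chi)^{-1}]$, which is $K[M]$ localized — exactly the fraction field $K(u_1,\dots,u_N)$.
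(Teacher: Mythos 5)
Your proposal is correct and follows essentially the same route as the paper's proof: diagonalize the regular representation into characters, take the kernel lattice $M$ of the exponent map onto $\widehat{A}$, and show that the monomials attached to a $\Z$-basis of $M$ are algebraically independent and generate the invariant field. The only quibble is your parenthetical claim that an invariant rational function can be rewritten with a monomial denominator (false for, e.g., $1/(1+y_\chi)$); the standard fix---multiply numerator and denominator by the $A$-conjugates of the denominator so that the denominator is invariant, then decompose the numerator into isotypic components---is exactly the step the paper's own proof also glosses over.
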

\begin{proof}
Let $|A|=a$. The group $A$ acts on $K(x_1, \ldots, x_a)$. Let
$V=\oplus_{i=1}^a Kx_i$
be the regular representation of $A$. Since $A$ is abelian and $K$
contains $\mu_e$, $V$ can be diagonalized-- i.e., $V$ has a
basis $\{y_1 , \ldots, y_a\}$ such that for any $g \in A$, $g\cdot
y_i=\chi_i(g)y_i$, for a character $\chi_i \in \nolinebreak \hat{A}=\nolinebreak \Hom(A,K^{*})$. Let $M$ be the
multiplicative free abelian group on the $y_i$, and define a group
homomorphism $\psi:M \to \hat{A}$ by sending $y_i \mapsto \chi_i$. The kernel of $\psi$ is a free
abelian group of rank $a$, generated, say, by $\{z_1, \ldots ,
z_a\}$. By construction, each $z_i \in K(A)$. If $f$ is any element of
$K(y_1, \ldots, y_a)^A$ $(=K(A))$, then since $g$ acts by
scalars on each monomial term of $f$, we must have $f \in K(z_1,
\ldots, z_a)$. Therefore, $K(A)=K(z_1, \ldots, z_a)$, and the $z_i$ are
algebraically independent since there are $a$ of them generating a
field of transcendence degree $a$. 
\end{proof}

\par We note that the question of
gonality in Noether's problem leads to several other natural questions. As alluded to earlier, one can ask about the gonality
of an arbitrary variety. Additionally, rather than just considering the rationality of
$K(G)$, we can 
ask whether $K(G)$ satisfies the weaker condition of stable
rationality--that is, whether $K(G)$ becomes rational upon adding
finitely many indeterminates-- or the even weaker
condition of retract
rationality (see \cite{Saltman} or \cite{InvCt} for a definition and
discussion of retract
rationality). These conditions have been studied for the general case
of a quotient variety $V/G$ when $G$ is
any linear algebraic group acting on a vector space $V$ (assuming such
a quotient makes sense). See \cite{InvCt} for a nice survey of this. 
\par This paper is organized as follows. In the next section, we
introduce notation and review
the results of Lenstra \cite{Lenstra} that we will need for Theorem 
\ref{mainupper} (throughout, our presentation of Lenstra's material is
suitably adapted for our purposes). In
Section \ref{bounding}, we begin by modifying Lenstra's method to
obtain Theorem
\ref{mainupper} in the case $K=\Q$, $G=\Z/p\Z$. We work out this
example in detail because the proof of the general
case of Theorem
\ref{mainupper} proceeds similarly to this case, which is less
hampered by notation. We conclude Section
\ref{bounding} by adding the necessary details for the general case of
Theorem \ref{mainupper}. In Section \ref{secbelow}, we investigate a
certain class of rational subfields of $K(A)$ and give conditional
lower bounds for the gonality of these fields. 

\section{Notation and Lenstra's Setup}\label{Lenstra}
Let $K$ be a field, $\pi$ a group of automorphisms of $K$, and $M$ a
$\pi$-module that is a finitely generated
free $\Z$-module with $\Z$-basis $x_1, \ldots, x_m$. We use
multiplication for the group operation of $M$, so elements of $M$ are
monomials in the $x_i$. The group ring
$K[M]$ is then isomorphic to the ring of Laurent polynomials in $m$ variables over
$K$-- that is, $K[x_1^{\pm 1}, \ldots, x_m^{\pm 1}]$, and its quotient field is the rational field $K(M)=K(x_1, \ldots,
x_m)$. The group $\pi$ acts on $K[M]$ by

\[ \left(\Sigma \alpha_i m_i \right)^{\sigma}=\Sigma
\alpha_i^{\sigma}m_i^{\sigma}, \alpha_i \in K, m_i \in M,
\]
which extends to an automorphism of the field $K(M)$. The units of
$K[M]$ are monomials-- that is, $K^*M$. 

\par We introduce a few other pieces of notation. By $\zeta_m$ we denote a
primitive $m$th root of unity. For a field $K$, we have the natural
injection $\Eva(K(\zeta_m)/K) \hookrightarrow (\Z/m\Z)^*$, which allows us to view $\Eva(K(\zeta_m)/K)$ as a subgroup of
$(\Z/m\Z)^*$. We will take the set of divisors of a positive integer
$n$ to be all positive divisors of $n$. The $n$th cyclotomic
polynomial will be denoted $\Phi_n$. Lastly, the function $\phi$
refers to Euler's $\phi$ function. 
\par We begin by studying our gonality question in the case $K=\Q$, $G=\Z/p\Z$, where $p$ is prime. Let $l=\Q(\zeta_p)$ and let $\p$ be a prime ideal of $\Z[\zeta_{p-1}]$ lying above
the rational prime $p$. Then $\p$ is of the form $(p,
\zeta_{p-1}-t)$, where $t$ is an integer that
generates $(\Z/p\Z)^*$ when reduced modulo $p$. Since $\F_p$ has all
($p-1$)st roots of unity, $p$ splits completely in $\Z[\zeta_{p-1}]$. Let
$\pi=\Gal(\Q(\zeta_p)/\Q)$. The additive group of the ideal $\p$ is a free
$\Z$-module of rank $r:=\phi(p-1)$. If $x_1, \ldots, x_r$ is a $\Z$-basis
for $\p$, written multiplicatively, then $\pi$ acts on the monomials
in the $x_i$ and thus on the field $l(\p)$ (acting on $l$ by Galois
automorphisms). 

\par Let $m$ be a divisor of $p-1$ and let $\pi '$ be the quotient group of  $\pi$
of order $m$. The group $\pi'$ can be identified with $\Gal(L/\Q)$ for a subfield $L \subseteq \Q(\zeta_p)$. We have a
ring homomorphism 
\begin{align} \psi_m: \Z[\pi]\to \Z[\pi']\to \Z[\zeta_m], \label{map}
\end{align} where the first map is induced by the natural quotient map
$\pi \twoheadrightarrow \pi '$, and the
second map is defined by sending a generator of $\pi'$ to
$\zeta_m$. This allows us to view any $\Z[\zeta_m]$-module as a
$\Z[\pi]$-module. For any divisor $m$ of $p-1$, following Lenstra \cite{Lenstra}, we define a functor
$F_m$ from the category of $\pi$-modules to the category of torsion-free
$\Z[\zeta_{m}]$-modules by 
\[ F_m(M)=\big(M\otimes _{\pi} \Z[\zeta_{m}]\big)/\{\textrm{additive torsion}\}, 
\]
where we view $\Z[\zeta_{m}]$ as a $\pi$-module via the map
$\psi_m$. 

\par We can make $\Z/p\Z$ into a $\pi$-module by identifying $\pi$ with
$(\Z/p\Z)^*\cong\Aut(\Z/p\Z)$. There exists a unique map $\Z[\pi] \to \Z/p\Z$ of
$\pi$-modules taking $1 \mapsto 1$. Let $J_p$ denote be the kernel of
this map, a free $\Z$-module of rank $p-1$. Lenstra shows:

\begin{prop}\textup{\cite[Proposition 3.6]{Lenstra}} \label{prop Fm}
$F_{m}(J_p)\cong \p$ if $m=p-1$, where $\p$, as above, is an ideal of
$\Z[\zeta_{p-1}]$ above $p$, and 
$F_{m}(J_p)\cong \Z[\zeta_m]$ if $m \neq p-1$. 
\end{prop}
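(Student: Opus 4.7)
The plan is to apply the right-exact functor $-\otimes_{\Z[\pi]}\Z[\zeta_m]$ to the defining short exact sequence of $\pi$-modules
\[0 \to J_p \to \Z[\pi] \to \Z/p\Z \to 0.\]
I would begin by choosing the generator $\sigma$ of $\pi$ whose image in $(\Z/p\Z)^{\times}$ is $t \bmod p$ (with $t$ the integer fixed in the description of $\p$), and by using the image of this $\sigma$ as the generator of $\pi'$ that $\psi_m$ sends to $\zeta_m$; this compatibility is what lets $\p$ and the functor $F_m$ talk to each other. The middle term of the tensored sequence becomes $\Z[\pi]\otimes_{\Z[\pi]}\Z[\zeta_m]\cong \Z[\zeta_m]$. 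Since $\sigma$ acts on $\Z/p\Z$ by multiplication by $t$ and on $\Z[\zeta_m]$ by multiplication by $\zeta_m$, the relation $\sigma x \otimes y = x \otimes \zeta_m y$ defining the tensor product identifies the right-hand term as
\[(\Z/p\Z)\otimes_{\Z[\pi]}\Z[\zeta_m] \;\cong\; \Z[\zeta_m]/(p,\,\zeta_m-t) \;\cong\; \F_p[x]/(\Phi_m(x),\,x-t) \;\cong\; \F_p/(\Phi_m(t)).\]

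Because $t$ has exact order $p-1$ in $\F_p^{\times}$, the value $\Phi_m(t)$ is zero in $\F_p$ iff $m=p-1$ and is a unit in $\F_p$ otherwise. Hence the cokernel of $J_p\otimes_{\Z[\pi]}\Z[\zeta_m]\to \Z[\zeta_m]$ is $\Z[\zeta_{p-1}]/\p \cong \F_p$ when $m=p-1$ and is $0$ when $m\neq p-1$. Reading off the image of the map then yields a surjection from $J_p\otimes_{\Z[\pi]}\Z[\zeta_m]$ onto $\p$ in the first case and onto $\Z[\zeta_m]$ in the second.

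To conclude, I would upgrade this surjection to an isomorphism after quotienting by torsion. Since $\p$ and $\Z[\zeta_m]$ are torsion-free, the torsion of $J_p\otimes_{\Z[\pi]}\Z[\zeta_m]$ lies in the kernel, producing an induced surjection $F_m(J_p)\twoheadrightarrow \p$ (resp.\ $\twoheadrightarrow \Z[\zeta_m]$). Both sides are finitely generated torsion-free $\Z$-modules, hence free, so it suffices to compare $\Z$-ranks. Tensoring the original exact sequence with $\Q$ kills $\Z/p\Z$ and gives $J_p\otimes\Q \cong \Q[\pi]$, and the ring decomposition $\Q[\pi]\cong \prod_{d\mid p-1}\Q(\zeta_d)$ together with the fact that $\psi_m\otimes\Q$ projects onto the $\Q(\zeta_m)$-component show $F_m(J_p)\otimes\Q \cong \Q(\zeta_m)$, of $\Q$-dimension $\phi(m)$. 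Since $\p$ and $\Z[\zeta_m]$ each also have $\Z$-rank $\phi(m)$, a surjection of free $\Z$-modules of the same finite rank is necessarily an isomorphism.

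The main obstacle I anticipate is bookkeeping rather than conceptual difficulty: one must keep the $\pi$-action on $\Z/p\Z$, the $\pi$-action on $\Z[\zeta_m]$ factored through $\psi_m$, and the specific integer $t$ appearing in $\p$ all coherent at once. Once the initial choices line these up, each remaining step --- the two tensor-product identifications, reading off the image, and the rank count --- is short and essentially formal.
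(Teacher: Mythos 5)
Your argument is correct: tensoring the defining sequence $0 \to J_p \to \Z[\pi] \to \Z/p\Z \to 0$ with $\Z[\zeta_m]$, identifying the cokernel as $\Z[\zeta_m]/(p,\zeta_m-t)\cong \F_p/(\Phi_m(t))$, and then passing to torsion-free quotients and comparing $\Z$-ranks is exactly the right mechanism, and your care in aligning the generator $\sigma\mapsto t$ with $\psi_m(\sigma)=\zeta_m$ is the one genuinely delicate point. The paper itself gives no proof but cites Lenstra's Proposition 3.6, whose argument proceeds along essentially these same lines, so there is nothing to correct.
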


The utility of the functor $F_m$ is demonstrated in the following
theorem of Lenstra, suitably adapted here for our purposes. 
\begin{thm} \textup{\cite[Proposition 2.4]{Lenstra}} \label{L thm}
Let $M$ be a finitely generated, projective $\pi$-module. The fields $l(M)^{\pi}$ and $l(\oplus_{m|p-1}F_{m}(M))^{\pi}$ are
isomorphic, where
$\pi$ acts separately on each direct summand of $\oplus_{m|p-1}F_{m}(M)$ via the maps $\psi_m$ in \eqref{map}.
\end{thm}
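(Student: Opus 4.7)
The plan is to exploit the decomposition of the rational group algebra $\mathbb{Q}[\pi]$ together with the projectivity hypothesis, reducing to the case of the regular module $\mathbb{Z}[\pi]$ and handling that case by an explicit diagonalization.

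First I would show that the canonical $\pi$-equivariant homomorphism $\theta:M\to\bigoplus_{m\mid p-1}F_m(M)$ is injective with finite cokernel. The key ingredient is the Chinese remainder isomorphism
\[
\mathbb{Q}[\pi]\;\cong\;\prod_{m\mid p-1}\mathbb{Q}(\zeta_m),
\]
coming from the factorization $x^{p-1}-1=\prod_{m\mid p-1}\Phi_m(x)$ applied to a generator of $\pi$. Tensoring $M$ with $\mathbb{Q}$ then decomposes as $M\otimes\mathbb{Q}\cong\bigoplus_m V_m$ with each $V_m$ a $\mathbb{Q}(\zeta_m)$-vector space, and unraveling the definition of $F_m$ identifies $F_m(M)\otimes\mathbb{Q}$ with $V_m$ as a $\pi$-module (via $\psi_m$). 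This places $M$ and $\bigoplus_m F_m(M)$ as commensurable $\pi$-stable lattices of the same rank.

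Next I would reduce to the case $M=\mathbb{Z}[\pi]$ using projectivity. Choose $M'$ with $M\oplus M'\cong\mathbb{Z}[\pi]^k$; since $F_m$ preserves direct sums, $F_m(M\oplus M')=F_m(M)\oplus F_m(M')$. If the theorem is known for $\mathbb{Z}[\pi]^k$, then $l(M\oplus M')^\pi\cong l(\bigoplus_m F_m(M\oplus M'))^\pi$, and a no-name lemma of Miyata/Colliot-Th\'el\`ene--Sansuc type lets one cancel the contribution of $M'$: a linear action of $\pi$ on a direct summand $V_2$ adds only a purely transcendental extension of transcendence degree $\dim V_2$ over the invariants of the first factor. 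Matching the purely transcendental tops on both sides then produces the desired isomorphism $l(M)^\pi\cong l(\bigoplus_m F_m(M))^\pi$.

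For the base case $M=\mathbb{Z}[\pi]$, one computes both sides directly. The field $l(\mathbb{Z}[\pi])=l(x_g:g\in\pi)$ carries the permutation action, while $l(\bigoplus_m F_m(\mathbb{Z}[\pi]))=l(\bigoplus_m\mathbb{Z}[\zeta_m])$. After passing to an extension $\tilde l$ of $l$ containing a primitive $(p-1)$-th root of unity, the discrete Fourier transform $y_\chi=\sum_g\chi(g^{-1})x_g$ diagonalizes the permutation action, and the characters $\chi\in\hat{\pi}$ partition into Galois orbits indexed by their orders $m\mid p-1$. Each orbit assembles into a $\pi$-stable block isomorphic to $F_m(\mathbb{Z}[\pi])=\mathbb{Z}[\zeta_m]$, and taking invariants and descending back to $l$ gives the required isomorphism. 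The main obstacle will be the no-name cancellation step, since invariant fields do not mechanically factor over direct sums; one must verify that the contributions of $M'$ and of $\bigoplus_m F_m(M')$ are purely transcendental with matching transcendence bases. Projectivity is essential here, as the global splitting of $\mathbb{Z}[\pi]^k$ anchors the complement rigidly enough that the transcendence bases on both sides can be read off from a single, common decomposition.
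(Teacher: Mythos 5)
The first thing to note is that the paper does not prove this statement at all: it is imported from Lenstra \cite{Lenstra} (his Proposition 2.4), so your argument has to stand on its own, and as written it does not. The decisive gap is the cancellation step. The no-name lemma of Miyata/Colliot-Th\'el\`ene--Sansuc applies to a group acting semilinearly on a \emph{vector space} over a field on which it already acts; here the complement $M'$ enters \emph{multiplicatively} -- $\pi$ acts on the Laurent generators of $l(M\oplus M')$ over $l(M)$ by monomials, not linearly -- and the available multiplicative analogue requires $M'$ to be a \emph{permutation} lattice (this is the result quoted in the paper from \cite{Lenstra} as Theorem 1.4). For a merely projective $M'$ the principle you invoke is false, and the counterexample is the subject of this very paper: $J_p$ is projective (Proposition \ref{proj}), yet $l(J_p)^{\pi}$ is not purely transcendental over $l^{\pi}=\Q$ when $\p$ is not principal (e.g.\ $p=47$, by Swan). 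So a projective summand does not in general ``add only a purely transcendental extension'' over the invariants of the base; nothing in your argument uses the extra feature that $M\oplus M'$ is free, and no cited result covers that situation. Worse, even if both ``tops'' were purely transcendental of the same degree, what you would obtain is $l(M)^{\pi}(t_1,\dots,t_r)\cong l(\bigoplus_m F_m(M))^{\pi}(u_1,\dots,u_r)$, i.e.\ only a \emph{stable} isomorphism; passing from this to $l(M)^{\pi}\cong l(\bigoplus_m F_m(M))^{\pi}$ is birational cancellation, which fails in general, and the rational-versus-stably-rational distinction is exactly the one the introduction is careful to keep separate. Since the theorem asserts an honest isomorphism (and the paper's gonality bookkeeping uses it as such), this step needs a genuinely new idea, not a matching of transcendence bases.

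The base case $M=\Z[\pi]$ is also incomplete. Over $\tilde l=l(\zeta_{p-1})$ the Fourier transform does diagonalize the permutation action, but the group whose invariants must then be taken is the larger one acting on $\tilde l(\dots)$, and ``descending back to $l$'' is precisely the nontrivial content: the theorem for free modules already encodes the assertion that $l(\bigoplus_{m\mid p-1}\Z[\zeta_m])^{\pi}$ is rational over $\Q$, and $\Z[\zeta_m]$, with a generator of the order-$m$ quotient of $\pi$ acting by multiplication by $\zeta_m$, is \emph{not} a permutation lattice (already for $m=4$ the action on the basis $\{1,\zeta_4\}$ is only a signed permutation, and no basis makes it a permutation), so Fischer-type diagonalization plus an unexamined descent does not suffice. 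Finally, your step 1 (commensurability of $M$ with $\bigoplus_m F_m(M)$ via $\Q[\pi]\cong\prod_{m\mid p-1}\Q(\zeta_m)$) is correct but carries no weight on its own: commensurable $\pi$-lattices can have non-isomorphic invariant fields, as the paper's central example shows (the lattices underlying $L_{\I}$ and $L_{\p}$ in Section \ref{3.1} differ by finite index, yet one invariant field is rational over $\Q$ and the other is not when $\p$ is non-principal). Thus the two steps that would carry the proof -- the reduction by cancellation and the free base case -- both have genuine gaps.
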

\begin{remark}
One checks that $F_{m}$ respects direct sums, a fact we will use hereon
without further reference. 
\end{remark}
We also have:

\begin{prop}\textup{\cite[Proposition 5.3]{Lenstra}}\label{prop Jp}
The field $\Q(\Z/p\Z)$ is isomorphic to a purely
transcendental extension of $l(J_p)^{\pi}$ (of transcendence degree 1).
\end{prop}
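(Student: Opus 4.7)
The plan is to pass to the splitting field $l = \Q(\zeta_p)$, apply Fischer's theorem to diagonalize the $\Z/p\Z$-action, and then reinterpret the resulting kernel lattice as a $\pi$-module. Since $\Z/p\Z$ fixes $l$ and $\pi$ fixes each $x_g$, the two actions commute, so $\Q(\Z/p\Z) = \bigl(l(x_g)^{\Z/p\Z}\bigr)^\pi$, and it suffices to understand $l(x_g)^{\Z/p\Z}$ together with its residual $\pi$-action. Because $l \supseteq \mu_p$, Fischer's theorem diagonalizes the regular representation: the vectors $y_i := \sum_{g \in \Z/p\Z} \zeta_p^{-ig} x_g$ for $0 \le i \le p-1$ satisfy $g_0 \cdot y_i = \zeta_p^{ig_0} y_i$, and if $M = \langle y_0, \ldots, y_{p-1}\rangle$ denotes the multiplicative free abelian group on them and $\psi\colon M \to \mu_p$ is the character map $y_i \mapsto \zeta_p^i$, then the argument of Fischer's theorem identifies $l(x_g)^{\Z/p\Z}$ with $l(\ker\psi)$.

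Next, I would identify $\ker\psi$ as a $\pi$-module. A direct computation shows that $\sigma_t \in \pi$ (defined by $\sigma_t(\zeta_p) = \zeta_p^t$) acts by $\sigma_t(y_i) = y_{it}$; in particular $y_0$ is $\pi$-invariant while $\pi$ permutes $\{y_1, \ldots, y_{p-1}\}$ via the regular action of $(\Z/p\Z)^* \cong \pi$. Matching $y_i$ (for $i \in (\Z/p\Z)^*$) with the element $\sigma_i \in \pi$ determined by $\sigma_i(1) = i$ yields a $\pi$-module isomorphism $\langle y_1, \ldots, y_{p-1}\rangle \cong \Z[\pi]$, under which $\psi$ restricts to the surjection $\Z[\pi] \twoheadrightarrow \Z/p\Z$ sending $1 \mapsto 1$, whose kernel is by definition $J_p$. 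Since $\psi(y_0) = 1$, the exponent of $y_0$ is unconstrained in $\ker\psi$, yielding $\ker\psi \cong \langle y_0\rangle \oplus J_p$ as $\pi$-modules with trivial action on $\langle y_0\rangle \cong \Z$.

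Therefore $l(x_g)^{\Z/p\Z} = l(\ker\psi) = l(J_p)(y_0)$, where $y_0$ is transcendental over $l(J_p)$ and fixed by $\pi$. Taking $\pi$-invariants and using that invariants of rational functions in a fixed transcendental element are computed coefficientwise, I obtain $\Q(\Z/p\Z) = l(J_p)(y_0)^\pi = l(J_p)^\pi(y_0)$, which is the desired rational extension of $l(J_p)^\pi$ of transcendence degree one. The step I expect to be most delicate is matching the $\pi$-action on $\langle y_1, \ldots, y_{p-1}\rangle$, inherited from the Galois action on $\mu_p$, with the regular action on $\Z[\pi]$ in such a way that $\psi$ becomes precisely the augmentation-type map whose kernel is $J_p$; the remaining steps are formal consequences of Fischer's theorem and standard Galois descent.
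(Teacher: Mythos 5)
Your argument is correct. Note, though, that the paper itself offers no proof of this statement: it is quoted directly from Lenstra (his Proposition 5.3), so there is no internal argument to compare against. What you have written is in effect a self-contained proof of the special case $K=\Q$, $A=\Z/p\Z$, and it follows the same mechanism that both Lenstra's proof and Section 4 of this paper use: the discrete Fourier transform $y_i=\sum_g \zeta_p^{-ig}x_g$ (compare the $z_{k;i}$ in Section \ref{secbelow}), Fischer's kernel-lattice description of the invariants over $l\supseteq\mu_p$ (which the paper proves in the Introduction), and the identification of the multiplicative lattice $\langle y_1,\ldots,y_{p-1}\rangle$ with $\Z[\pi]$ so that the character map becomes the $\pi$-module surjection $\Z[\pi]\to\Z/p\Z$, $1\mapsto 1$, with kernel $J_p$. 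Your verifications are the right ones: $\sigma_t(y_i)=y_{ti}$ makes $\psi$ $\pi$-equivariant, $y_0$ splits off as a trivial rank-one summand, giving $\ker\psi\cong \Z\oplus J_p$, and the final step $\bigl(l(J_p)(y_0)\bigr)^\pi=l(J_p)^\pi(y_0)$ is legitimate (to make the ``coefficientwise'' claim airtight, write an invariant as a ratio of polynomials in $y_0$ in lowest terms with monic denominator, so uniqueness forces the coefficients to be $\pi$-invariant). Your proof buys a concrete, elementary derivation of exactly the statement used here, whereas the paper (and Lenstra) obtain it as a special case of a more general reduction valid for arbitrary finite abelian $A$, where the bookkeeping with the modules $J_s$ and the functors $F_m$ is needed.
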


We wish to apply Theorem \ref{L thm} to the case
$M=J_p$, so we need to establish that $J_p$ is projective, which we do now:
\begin{prop}\textup{\cite[Proposition 7.1]{SwanInd}} \label{proj}
Let $R$ be Dedekind domain of characteristic zero and $\pi$ a finite
group of order $n$. Let $I$ be an ideal of $R\pi$ such that the ideal
$(R\pi:I)$ of $R\pi$ and the ideal $nR$ of $R$ are comaximal (that
is, there exists $a \in (R\pi:I), b \in nR$ such that $a+b=1$). Then $I$
is a projective $R\pi$-module. 
\end{prop}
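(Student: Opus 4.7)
The plan is to prove projectivity of $I$ over $R\pi$ by localizing at each maximal ideal $\p$ of $R$ and treating two cases separately: primes $\p$ with $n \notin \p$, handled by Maschke's averaging projection, and primes $\p \supseteq nR$, handled by the comaximality hypothesis. As a preliminary observation, since $R$ is a Dedekind domain and $I$ is an $R$-submodule of the finitely generated free $R$-module $R\pi$, the module $I$ is $R$-torsion-free, hence $R$-projective. Because $R\pi$ is finite over $R$ and $I$ is finitely generated as an $R\pi$-module, projectivity of $I$ over $R\pi$ is equivalent to projectivity of $I_\p$ over $R_\p\pi$ for every maximal ideal $\p \subset R$.

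For a prime $\p$ of $R$ with $n \notin \p$, the order $n$ of $\pi$ is invertible in $R_\p$, and the standard averaging operator $e = n^{-1}\sum_{g\in\pi}g$ exhibits $R_\p\pi$ as a separable $R_\p$-algebra. A direct consequence is that every $R_\p\pi$-module which is $R_\p$-projective is also $R_\p\pi$-projective, and this applies to $I_\p$.

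For a prime $\p$ containing $nR$, I would use the hypothesis directly. Writing $1 = a + b$ with $a \in (R\pi:I)$ and $b \in nR \subseteq \p R$, the image of $a$ in $R_\p\pi / \p R_\p\pi = (R_\p/\p)\pi$ is $1$. Since $R_\p\pi$ is a finitely generated $R_\p$-module over the local ring $R_\p$, the ideal $\p R_\p\pi$ lies in the Jacobson radical of the semilocal ring $R_\p\pi$, so $a$ is a unit in $R_\p\pi$. Consequently the image of $(R\pi:I)$ in $R_\p\pi$ is the entire ring. Interpreting $(R\pi:I)$ as the trace ideal $\sum_f f(I)$ with $f$ ranging over $\Hom_{R\pi}(I, R\pi)$---so that its equaling $R\pi$ locally yields a dual basis for $I$---one deduces that $I_\p$ is a direct summand of a finite free $R_\p\pi$-module and hence $R_\p\pi$-projective.

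The main obstacle is the last step: translating the local equality of $(R\pi:I)$ with $R_\p\pi$ into an explicit splitting of a surjection from a free $R_\p\pi$-module onto $I_\p$. This requires the correct reading of the notation $(R\pi:I)$ and the verification that the dual basis extracted from an expression $a = \sum_i f_i(x_i)$ with $a$ a unit gives an honest $R_\p\pi$-linear section. The remaining ingredients---$R$-projectivity of $I$, locality of projectivity over $R$, and Maschke at primes away from $n$---are standard.
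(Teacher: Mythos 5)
The paper gives no argument for this proposition---it is quoted verbatim from Swan \cite{SwanInd}---so the comparison is with Swan's proof, whose skeleton you have reproduced correctly: reduce to the localizations $R_\p\pi$ (legitimate since $R\pi$ is module-finite over $R$ and $I$ is finitely generated), observe that $I$ is $R$-projective because it is a finitely generated torsion-free module over a Dedekind domain, handle primes $\p$ with $n\notin\p$ by the Maschke/separability averaging argument, and use the comaximality hypothesis only at primes $\p\supseteq nR$, where $a=1-b$ becomes a unit of $R_\p\pi$ because $\p R_\p\pi$ lies in the radical. All of that is correct and is essentially the cited argument.

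The gap is exactly where you flag it, and it is genuine. You read $(R\pi:I)$ as the trace ideal $\sum_f f(I)$, $f\in\Hom_{R\pi}(I,R\pi)$, and assert that its local equality with $R_\p\pi$ ``yields a dual basis.'' It does not: an equation $1=\sum_i f_i(x_i)$ says only that $I_\p$ is a generator (equivalently, $R_\p\pi$ is a direct summand of a power of $I_\p$), not that $I_\p$ is a summand of a free module; the dual-basis identity $m=\sum_i f_i(m)x_i$ is a different statement and requires a further argument (realizing each $f_i$ as multiplication by an element of the group algebra over the fraction field of $R$, and in the noncommutative case keeping track of sides), which you have not supplied. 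Moreover the trace-ideal reading is not the intended one. The way the proposition is used in the paper---$p\in(\Z[\pi]:J_p)$ precisely because $p\,\Z[\pi]\subseteq J_p$---shows that $(R\pi:I)$ means $\{x\in R\pi : xR\pi\subseteq I\}$, the annihilator of $R\pi/I$, i.e.\ the largest two-sided ideal contained in $I$. With that reading, your computation at a prime $\p\supseteq nR$ already finishes the proof with no dual-basis step at all: the unit $a=1-b$ lies in $I_\p$ (since $a=a\cdot 1\in aR\pi\subseteq I$), hence $I_\p=R_\p\pi$ is free. That is how Swan's argument concludes; as written, your step at the primes dividing $n$ does not go through.
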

We may apply Proposition \ref{proj} in our case since $p \in (\Z[\pi]:J_p)$ and $\pi$ is of
order $p-1$.

\par Putting together Propositions \ref{prop Fm} and \ref{prop Jp} with
Theorem \ref{L thm}, we find that $\Q(\Z/p\Z)$ is isomorphic to a
rational extension of the field $L_{\p}^{\pi}$, where 
\[
L_{\p}:=l\left(\left(\bigoplus_{m|p-1, m \neq p-1}\Z[\zeta_{m}]\right)\oplus \p\right).
\]

From here, we can proceed as Lenstra does in \cite[Theorem
2.6]{Lenstra} to show
that if $\p$ is principal, then $L_p^{\pi}$, and thus $\Q(\Z/p\Z)$, is
rational over $\Q$:
\newline
\par Suppose that $\p$ is principal, so it is a free
$\Z[\zeta_{p-1}]$-module. One checks \cite[Proposition 2.3]{Lenstra} that
$F_m(\Z[\pi])=\Z[\zeta_m]$, for every $m$ dividing $p-1$. Therefore,
aside from the $m=p-1$ summand, the summands of $\oplus_{m|p-1} F_m(\Z[\pi])$
and $\oplus_{m|p-1} F_m(J_p)$ agree, and for the $m=p-1$ summand, since $\p$ is
assumed to be principal, we have $\Z[\zeta_{p-1}] \cong \p$ as
$\Z[\zeta_{p-1}]$-modules. Thus
\[ \oplus_m F_m(J_p) \cong \oplus_m
F_m(\Z[\pi])\]
 as $\Z[\pi]$-modules, and by applying Theorem \ref{L thm} twice, it
follows that $l(J_p)^{\pi}\cong l(\Z[\pi])^{\pi}$. But $\Z[\pi]$ is a 
$\Z[\pi]$-permutation module-- that is, a free $\Z$-module with a $\Z$-basis
that is permuted by $\pi$-- and for any finitely generated $\Z[\pi]$-permutation
module $N$, $l(N)^{\pi}$ is rational over $l^{\pi}$ \cite[Theorem 1.4]{Lenstra}. Therefore, $l(J_p)^{\pi}$ is rational over $l^{\pi}=\Q$, as
desired. 

\section{Bounding the Gonality from Above}\label{bounding}
\subsection{The Case $K=\Q$, $G=\Z/p\Z$}\label{3.1}
Suppose now that the ideal $\p$ is not principal. Lenstra
\cite{Lenstra} shows this
implies that
$\Q(\Z/p\Z)$ is not rational. In this case, we wish to bound $\gon(\Q,
\Z/p\Z)$ from above. Recall from the Introduction that
\[ \gon (\Q, \Z/p\Z)=\textrm{min}\{[\Q(\Z/p\Z):L]: L/\Q \textrm{ is rational} \}.
\]  

Let $\mathfrak{I}$ be a principal ideal of the ring $\Z[\zeta_{p-1}]$ contained in the ideal $\p$. Consider the field 
\[
L_{\I}:=l\left(\left(\bigoplus_{
      m|p-1, m \neq p-1}\Z[\zeta_{m}]\right)\oplus \I \right),
\]
 a subfield of $L_{\p}$. Since $\I$ is a free $\Z[\zeta_{p-1}]$-module
 of rank 1, 
\[ \left(\bigoplus_{m|p-1, m \neq p-1}\Z[\zeta_{m}]\right)\oplus \I
\cong \bigoplus_{m|p-1}F_m\left(\Z[\pi]\right)
\]
as $\Z[\pi]$-modules. Therefore, $L_{\I}^{\pi}$ is isomorphic to
$l(\oplus_{m}F_m(\Z[\pi]))^{\pi}$, which, as shown in the case when
$\p$ was assumed to be principal, is a rational
extension of $l^{\pi}=\Q$.

\begin{figure}[h]
\begin{center}
  \setlength{\unitlength}{2.5cm}
  \caption{}\label{fig}
 \begin{picture}(1,2)(2.7,-.4)
 \put(3.04,0){$L_{\I}^{\pi}$}
    \put(3.04,.6){$L_{\p}^{\pi}$}
    \put(3.04,1.2){$L_{\p}^{\pi}(T)$}
    \put(1.54, .6){$l(J_p)^{\pi}$}   
    \put(2.44, .6){$\cong$}
    \put(1.54, 1.2){$\Q(\Z/p\Z)$}
    \put(2.44,1.2){$\cong$}
    \put(3.94, .6){$L_{\I}^{\pi}(T)$}
    \put(3.04, -.55){$\Q$}
    \put(3.75, .3){\textrm{\footnotesize{rational deg 1}}}
    \put(3.15, -.3){\textrm{\footnotesize{rational}}}
   
    \put(3.15,.3){$d$}
   \put(3.7,1){$d$}
    
    \put(2.0,.9){\textrm{\footnotesize{rational deg 1}}}
    \qbezier(3.08,.21)(3.08,.18)(3.08,.53)
    \qbezier(3.08,.8)(3.08,.8)(3.08,1.15)
   \qbezier(1.58, .8)(1.58,.8)(1.58, 1.15)
    \qbezier(3.28,.1)(3.28,.1)(3.85,.55)
    \qbezier(3.85, .75)(3.85, .75)(3.4, 1.12)
   \qbezier(3.08, -.35)(3.08, -.35)(3.08, -.08)
\end{picture}
\end{center}
\end{figure}

\par Our next task is to give an upper bound for
$[L_{\p}^{\pi}:L_{\I}^{\pi}]:=d$, which will serve as our upper bound for
$\gon(\Q, \Z/p\Z)$ (see Figure \ref{fig}). Since $l(J_p)\cong
L_{\p}^{\pi}$, by Proposition \ref{prop
  Jp}, we can identify $\Q(\Z/p\Z)$ with $L_{\p}^{\pi}(T)$ for an
indeterminate $T$.  Since $\pi$ acts faithfully on $L_{\I}$, from elementary field theory, we have
$[L_{\p}^{\pi}:L_{\I}^{\pi}]=[L_{\p}:L_{\I}]$. Using elementary field theory
and using the fact that if $M$ and $N$ are two free
$\Z$-modules, then $l(M\oplus N)=l(M)(N)$, we find that
$[L_{\p}:L_{\I}]=[l(\p):l(\I)]$. 

\begin{lemma}\label{M}
Let $M$ and $N$ be free $\Z$-modules with $N \subseteq M$ and $|M:N|< \infty$. Let $K$ be a
field. Then $[K(M):K(N)]
=|M:N|$. 
\end{lemma}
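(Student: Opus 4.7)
The plan is to show that $K[M]$ is a free $K[N]$-module of rank $|M:N|$ and then to transfer this rank equality into a field degree equality by localizing.

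First, I pick a set of coset representatives $m_1, \ldots, m_r$ for $N$ in $M$, where $r := |M:N|$. The disjoint union $M = \bigsqcup_{i=1}^{r} m_i N$ lifts to a $K[N]$-module decomposition
\[
K[M] \;=\; \bigoplus_{i=1}^{r} m_i \cdot K[N],
\]
exhibiting $K[M]$ as a free $K[N]$-module of rank $r$ (the $K[N]$-action being multiplication inside the overring $K[M]$).

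Next, I localize at $S := K[N] \setminus \{0\}$. By construction $S^{-1}K[N] = K(N)$, and since localization is flat and in particular commutes with finite direct sums, $S^{-1}K[M] = \bigoplus_{i=1}^{r} m_i \cdot K(N)$ is a free $K(N)$-module of rank $r$. This ring sits between $K[M]$ and $K(M) = \mathrm{Frac}(K[M])$, is a domain, and is module-finite (hence integral) over the field $K(N)$; any integral domain integral over a field is itself a field, so $S^{-1}K[M]$ is a field squeezed between $K[M]$ and its fraction field, forcing $S^{-1}K[M] = K(M)$. Combining now yields $[K(M):K(N)] = \dim_{K(N)} K(M) = r = |M:N|$.

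The only step needing real care is the identification $S^{-1}K[M] = K(M)$, but the integrality-over-a-field argument above is short and standard, so I do not anticipate a serious obstacle. As an alternative, more hands-on route, one could use the elementary divisor theorem to choose bases $e_1, \ldots, e_n$ of $M$ and $e_1^{d_1}, \ldots, e_n^{d_n}$ of $N$ (in multiplicative notation), and then work through the tower
\[
K(N) \;\subseteq\; K(e_1, e_2^{d_2}, \ldots, e_n^{d_n}) \;\subseteq\; \cdots \;\subseteq\; K(M),
\]
noting that each successive extension adjoins a $d_i$-th root of an element transcendental over the previous field, which is irreducible by Eisenstein at that transcendental, so the degrees multiply to $d_1 \cdots d_n = |M:N|$.
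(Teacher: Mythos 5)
Your main argument is correct, and it is genuinely different from the paper's. The paper proceeds by induction on $|M:N|$, reducing to the case $M/N$ cyclic of prime order $p$, and uses two claims about automorphisms (that $[K(M):K(N)]$ is unchanged under an automorphism of $M$, and that an isomorphism between two index-$p$ submodules extends to an automorphism of $M$) to reduce to the explicit situation $N=\langle x_1^p,x_2,\ldots,x_r\rangle$, where the degree is computed directly. You instead observe that the coset decomposition $M=\bigsqcup_i m_iN$ makes $K[M]$ a free $K[N]$-module of rank $r=|M:N|$, and then localize at $S=K[N]\setminus\{0\}$: the localization is a domain, module-finite hence integral over the field $K(N)$, hence itself a field lying between $K[M]$ and $\operatorname{Frac}(K[M])=K(M)$, so it equals $K(M)$ and $[K(M):K(N)]=r$. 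All the steps check out ($K[M]$ is a Laurent polynomial ring, so a domain; localization preserves the direct sum; a domain integral over a field is a field), and this route buys you more than the lemma asserts: it produces an explicit $K(N)$-basis of $K(M)$ (the coset representatives) and needs no induction or automorphism bookkeeping. Your alternative route via elementary divisors (bases $e_i$ of $M$ and $e_i^{d_i}$ of $N$, with each step of the tower adjoining a root of $X^{d_i}-e_i^{d_i}$, Eisenstein at the transcendental $e_i^{d_i}$) is closer in spirit to the paper --- it is essentially the paper's endgame done for all elementary divisors at once --- but it replaces the paper's Claims I and II by the Smith normal form, which is arguably cleaner, since the paper's Claim II (extending an isomorphism of index-$p$ submodules by fixing a single element outside both) is the one step there that is stated rather breezily.
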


\begin{proof}
By induction on $|M:N|$, we may assume that $M/N$ is cyclic of prime
order $p$. The
result will follow from the following two claims.
\begin{itemize}
\item {\textbf{Claim I}} If $\phi$ is an automorphism of $M$, then
  $[K(M):K(N)]=[K(M):K(\phi(N))]$.  \par \textbf{Proof} We may extend $\phi$
  by linearity to an automorphism of $K(M)$. We have
  \\$\phi(K(N))=K(\phi(N))$, and since $\phi$ sends elements of $K(M)$
  linearly independent over $K(N)$ to elements of $K(M)$
  linearly independent over $\phi(K(N))$, we have \\$[K(M):K(N)]\leq
  [K(M):\phi(K(N))]$. We obtain the reverse inequality using
  $\phi^{-1}$. 
 
\item{\textbf{Claim II}} Let $N, N'$ be submodules of $M$ of index $p$ and let
  $\phi:N \to N'$ be an isomorphism. Then $\phi$ may be extended to an
  automorphism $\tilde{\phi}$ of $M$.  \par \textbf{Proof}
 We can do this by picking an element $x \in M$ that lies in neither $N$
nor $N'$ and setting $\tilde{\phi}(x)=x$.
\end{itemize}
These two claims allow us to assume that if $M$ is generated by
elements $x_1, x_2, \ldots , x_r$, then $N$ is generated by $x_1^p,
x_2, \ldots ,
x_r$, and in this case it is clear that $[K(M):K(N)]=p$. 
\end{proof}

From Lemma \ref{M} and the discussion preceding it, we see that
$\gon(\Q ,\Z/p\Z)$ can be bounded above by 
\[\min_{\I \textrm{ principal, } \I \subseteq \p}|\p:\I|.
\]
 Bounding this quantity is our next task.
\par We take $\I=(\zeta_{p-1}-t)$. By \cite{primrt}, for $p$
satisfying $\log_2(p-1)\geq 24 $, there exists a primitive root $t$
modulo $p$ such that 
\begin{equation} |t|\leq \frac{1}{2}p^{1/2}, \label{rtbnd}
\end{equation}
and if $g(p)$ denotes the least primitive root modulo $p$ in absolute
value, then 
\begin{equation} g(p)=O(p^{1/4}). \label{Obnd}
\end{equation}
We have
$|\p:\I|=N(\I)/N(\p)=N(\I)/p$, where
$N=\textrm{Norm}_{\Q(\zeta_{p-1})/\Q}$. Let
$\gamma=\Eva(\Q(\zeta_{p-1})/\Q))$. We have 
\[ 
N(\I)=\left|\prod_{\sigma \in \gamma}(t-\zeta_{p-1}^{\sigma})\right|\leq(t+1)^{\phi(p-1)}\leq \left(\frac{1}{2}p^{1/2}+1\right)^{\frac{p-1}{2}},
 \] where we are using that
 $\phi(p-1)\leq \frac{p-1}{2}$. We thus obtain a numerical version of Theorem
 \ref{mainupper} for $p$ satisfying $\log_2{p-1}\geq 24$:

\[
\gon(\Q, \Z/p\Z) \leq
\frac{1}{p}\left(\frac{1}{2}p^{1/2}+1\right)^{\frac{p-1}{2}}.
\]
 If we use \eqref{Obnd},
we obtain 
\[
\gon(\Q, \Z/p\Z) \leq O(p^{\frac{p-9}{8}}).
\]
We close this subsection with a series of remarks. 

\begin{remark}
The upper bound for $\gon(\Q, \Z/p\Z)$ given by finding the principal
ideal of minimal index inside $\p$ tends to infinity with $p$ since
for any ideal $\I \subseteq \p$, we can write $\p=\I \mathfrak{I}'$ for some
ideal $\mathfrak{I}'$, so that $|\p:\I|=N(\I)/N(\p)=N(\mathfrak{I}')\geq p$ since the norm
of a prime ideal $\q$ lying over a rational prime $q$ is $q^f$, where
$q^f \equiv 1 \pmod{p-1}$. 
\end{remark}

\begin{remark}
As mentioned in the introduction, we also have the weaker bound
\newline $\gon(\Q, \Z/p\Z) \leq (p-1)!$, given by the field $\Q(x_1 ,\ldots,
x_p)^{S_p}$. 
\end{remark}
\begin{remark}The extension $\Q(\Z/p\Z) / \Q(x_1, \ldots, x_p)^{S_p}$ is
not Galois. Let $L=L_{\I}^{\pi}(T)$, where $T$ is an indeterminate, as in
Figure \ref{fig}. This
is the field that gives the bound in Theorem \ref{mainupper}. It is
unknown to the author for which $\p$ the extension $\Q(\Z/p\Z)/L$ might be Galois,
or whether there necessarily exists any field $L'$ for which $\Q(\Z/p\Z)/L'$ is
finite Galois.  
\end{remark}

\subsection{The General Case}\label{generalcase}
We now extend our results about $\gon(\Q, \Z/p\Z)$ to $\gon(K, A)$,
where $K$ is an arbitrary field and $A$ an arbitrary abelian
group. Following Lenstra, if $char k>0$, write 
\[
A=P\oplus B, \]
 where the order of $P$
is a power of $char k$ and the order of $B$ is prime to $char k$, and write 
\begin{equation} B\cong \oplus_{s\in \Omega}\Z/s\Z, \label{decomp}
\end{equation}
where $\Omega$ is the set of prime powers giving the elementary
divisor decomposition of $B$ (with possible repetitions).
Let $e$ be the
exponent of $B$ and put $L=K(\zeta_{e})$, and $\pi=\Eva(L/K)$. From
hereon we make the following assumption:
\[\textrm{For every prime power } s \textrm{ dividing }
|B|, \pi_s:=\Eva(K(\zeta_s)/K) \textrm{ is cyclic.} 
\]
The only situation this precludes is $s=2^n, n\geq 3$ and $char$ $K\neq
2$. 
We have a map $\pi_s \hookrightarrow \Aut(\Z/s\Z)$,
which we can use to make $\Z/s\Z$ into a $\pi_s$-module. In analogy to
the group $J_p$ defined in Section \ref{Lenstra}, we define 

\[
J_s = \ker \psi: \Z[\pi_s] \to \Z/s\Z,
\]
where $\psi$ is the $\Z[\pi_s]$-module map sending $1 \mapsto 1$. In
\cite{Lenstra}, there is a more general version of Proposition \ref{prop Jp}
stating that $K(A)$ is a rational extension of $L(I)^{\pi}$, where 
\[ 
I = \oplus_{s \in \Omega'} J_s, \textrm{ and }\Omega '=\{s \in
\Omega, s \textrm{ is not a power of }2.\}
\]
The group $\pi$ acts on $I$ via the quotient maps $\pi \to
\pi_s$. 
\par For a given prime power $s=l^u$ dividing $|B|$, let
\[
m_s=[K(\zeta_s):K].
\] 
Recall the definition of the functor $F_m$ from Section
\ref{Lenstra}. In the case that $\pi$ is not cyclic, we must modify our
definition (which will agree with Lenstra's in \cite{Lenstra}). For a divisor $m_s'$
of $m_s$, we define the action of $\pi$ on $\Z[\zeta_{m'_s}]$ to be
given via $\Z[\pi]\to \Z[\pi_{m'_s}]\to \Z[\zeta_{m'_s}]$, where
$\pi_{m'_s}$ is the quotient of $\pi_s$ of order $m'_s$ (with this
notation, $\pi_s=\pi_{m_s}$). 
\par We now define, for a $\pi$-module $M$,
\[
F_{m'_s}(M)=M\otimes_{\pi}\Z[\zeta_{m'_s}].
\]

\begin{remark}
We have abused notation since the
$m'_s$ in both $F_{m'_s}$ and $\pi_{m'_s}$ is
actually representing a particular quotient of $\pi$ of order $m'_s$,
rather than just the number $m'_s$. We will maintain this convention throughout.
\end{remark}

Proposition \ref{prop Fm} can be strengthened to
say that for $m'_s$ dividing $m_s$, $F_{m'_s}(J_s)\cong \Z[\zeta_{m'_s}]$ if
$m'_s \neq m_s$, and $F_{m_s}(J_s)$ is of the form $(\zeta_{m_s}-t,
l)$, with $t$ being an integer whose reduction modulo $l$
generates $\Eva (K(\zeta_l)/K)$, where we are viewing $\Eva(K(\zeta_l)/K)$ as
a subgroup of $(\Z/l\Z)^*$. Let $\mathfrak{a}_s=F_{m_s}(J_s)$,
and write $s=l^u$. The polynomial
 $\Phi_{\phi(s)}$ splits into $\phi(l-1)$ distinct irreducible factors over
 $\F_l$. We also know that the ramification degree of $l$ in
 $\Q(\zeta_{l^{u-1}(l-1)}$) is $l^{u-2}(l-1)$. Therefore, the ideal
 $\mathfrak{a}_s=(\zeta_{m_{s}}-t, l)$, which is a prime ideal lying above $l$ in
 $\Z[\zeta_{m_s}]$, has norm $l$. 

\par For odd $s$, we have from \cite[Proposition 3.3]{Lenstra} that $J_s$ is a
projective $\pi$-module. Let $O'$ be a subset of $\Omega'$
consisting of, for each odd prime $p$ dividing $e$, the largest power of
$p$ dividing $e$.
From \cite[Corollary 2.5]{Lenstra}, a corollary to
our Theorem \ref{L thm}, we obtain 
\begin{equation} L(I)^{\pi} \cong 
L(\oplus_{s \in O', m'_s|m_s} F_{m'_s}(I))^{\pi}. \label{I}
\end{equation} 

\begin{remark}
The index of the direct sum on the right hand side of \eqref{I} should technically
be in bijection with all cyclic quotients of $\pi$; however, it follows from
\cite[Proposition 3.6]{Lenstra} that we only need to consider
quotients of $\pi$ corresponding to subfields of $L$ that are
contained in $K(\zeta_{s})$ for prime powers $s$ dividing $e$, and we
only need to consider odd prime powers by Proposition \ref{0Len}
below. \emph{Important Notational Point: }Because
we only wish to count each such quotient of $\pi$ once, indices of the form
$\{m_s'|m_s ,\textrm{ }s \textrm{ in some subset of } \Omega \}$
are understood to include the integer $1$ exactly once across all $s$,
as opposed to including $1$ as a divisor for each $s$.
\end{remark}

Using \cite[Proposition 2.1]{Lenstra} and \cite[Proposition
3.6]{Lenstra}, and the fact that $K(\zeta_a)\cap
K(\zeta_{b})=K$ if $\gcd(a,b)=1$, we obtain the following.

\begin{prop} \label{0Len}
For $s \in O'$, let $N_s$ denote either $J_s$
  or $\pi_{m_s}$. 
\begin{enumerate}
\item If $\gcd(r,s)=1$ and $m_s'\neq 1$, then $F_{m'_s}(N_r)=0$.
\item If $m_s'|m_s$, and $m_s'\neq m_s$, then $F_{m'_s}(N_s)=0$. 
\item $F_1(N_s)=\Z$ for all $s \in \Omega'$. 
\end{enumerate}
\end{prop}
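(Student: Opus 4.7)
The plan is to verify each of the three claims separately by unwinding the definition of $F_{m_s'}$ as the tensor product $M \otimes_{\Z[\pi]} \Z[\zeta_{m_s'}]$ (modulo additive torsion) and exploiting the Galois-theoretic input that $K(\zeta_a) \cap K(\zeta_b) = K$ whenever $\gcd(a,b) = 1$. The key structural observation I would first record is that $\pi$ acts on $\Z[\zeta_{m_s'}]$ through the composition $\pi \to \pi_s \to \pi_{m_s'}$, so whenever the left-hand factor $M$ has its $\pi$-action factoring through a quotient $\pi'$ of $\pi$, the tensor product reduces to the coinvariants of $\Z[\zeta_{m_s'}]$ under the image of $\ker(\pi \to \pi')$ in $\pi_{m_s'}$.

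For claim (1), I would exploit the hypothesis $\gcd(r,s) = 1$: combined with $K(\zeta_r) \cap K(\zeta_s) = K$, this yields a surjection $\pi \twoheadrightarrow \pi_r \times \pi_s$, so $H_r := \ker(\pi \to \pi_r)$ surjects onto $\pi_s$ and hence onto $\pi_{m_s'}$. Since $\pi$ acts on $N_r$ through $\pi_r$, elements of $H_r$ act trivially on $N_r$; inside the tensor product, their action on $\Z[\zeta_{m_s'}]$ must therefore also be trivial. Requiring triviality for a lift of a generator of $\pi_{m_s'}$ forces $(\zeta_{m_s'} - 1) F_{m_s'}(N_r) = 0$, so for $m_s' \neq 1$ the tensor product is a torsion $\Z$-module and vanishes after modding out additive torsion.

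For claim (2), I would apply the short exact sequence $0 \to J_s \to \Z[\pi_s] \to \Z/s\Z \to 0$, tensor with $\Z[\zeta_{m_s'}]$ over $\Z[\pi]$, and invoke Proposition \ref{prop Fm} together with \cite[Proposition 2.1]{Lenstra} to compute each term; the tensor relations $\sigma \otimes 1 = 1 \otimes \zeta_{m_s'}^{a_\sigma}$ for $\sigma \in \pi_s$ collapse the resulting quotients into torsion when $m_s'$ is a proper divisor of $m_s$. For claim (3), $\Z[\zeta_1] = \Z$ carries the trivial $\pi$-action, so $F_1(N_s)$ is simply the $\pi$-coinvariants of $N_s$; for $N_s = J_s$ these can be read off from the defining exact sequence, since $\Z[\pi_s]$ has coinvariants $\Z$ while $\Z/s\Z$ has torsion coinvariants. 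The hardest step will be claim (2), which requires a careful reconciliation of the two candidates for $N_s$ with the $\pi_s$-action on $\Z[\zeta_{m_s'}]$ for proper divisors $m_s'$ of $m_s$, especially when $s$ is a higher prime power rather than a prime.
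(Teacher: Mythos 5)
The paper offers no worked proof of Proposition \ref{0Len}: it is obtained by citing Lenstra's Propositions 2.1 and 3.6 together with the fact that $K(\zeta_a)\cap K(\zeta_b)=K$ for $\gcd(a,b)=1$. Your direct verification of parts (1) and (3) is sound and matches the content of those citations: for (1), linear disjointness makes $\ker(\pi\to\pi_r)$ surject onto $\pi_{m_s'}$, so the coinvariants are annihilated by $\zeta_{m_s'}-1$, hence by the integer $\Phi_{m_s'}(1)$, and vanish after killing additive torsion; for (3), right-exactness of coinvariants applied to $0\to J_s\to\Z[\pi_s]\to\Z/s\Z\to 0$ plus a rank count gives $F_1(N_s)=\Z$. (You are right to reinstate the quotient by additive torsion from the Section 2 definition; without it even part (1) fails.)

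Your treatment of part (2) is where the argument breaks down. The relations $\sigma\otimes 1=1\otimes\zeta_{m_s'}^{a_\sigma}$ for $\sigma\in\pi_s$ do not "collapse the quotients into torsion": they simply identify $\Z[\pi_s]\otimes_{\Z[\pi]}\Z[\zeta_{m_s'}]$ with $\Z[\zeta_{m_s'}]$, which is nonzero and torsion-free, and the strengthened version of Proposition \ref{prop Fm} stated just before Proposition \ref{0Len} asserts $F_{m_s'}(J_s)\cong\Z[\zeta_{m_s'}]$ for every proper divisor $m_s'$ of $m_s$; these summands appear as nonzero terms in \eqref{J} and \eqref{a}. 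Torsion is produced only when some element of $\pi$ acts trivially on the module but maps to a nontrivial element of $\pi_{m_s'}$ — precisely your mechanism for part (1) — and no such element exists when the module is $N_s$ itself and $m_s'\mid m_s$. So the computation you propose, if carried out (indeed, invoking Proposition \ref{prop Fm} as you suggest), would contradict the statement rather than prove it. The vanishing that the rest of the paper actually needs, and that part (2) must be read as asserting, is $F_{m_s'}(N_r)=0$ whenever the quotient $\pi_{m_s'}$ does not factor through $\pi_r$ (e.g.\ $r\mid s$, $r\neq s$, and $m_s'\nmid m_r$); that correct version follows from the same argument as your part (1), applied to $\ker(\pi\to\pi_r)$, whose image in $\pi_{m_s'}$ is nontrivial in that case — not from the exact-sequence computation you outline.
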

Thus we have
\begin{gather*}
\oplus_{s \in O', m'_s|m_s}
F_{m'_s}(I)=\oplus_{s \in
  O'}\oplus_{m'_s|m_s}F_{m'_s}(\oplus_{r \in \Omega'}J_r)\cong
\end{gather*}
\begin{gather}
\oplus_{r \in
  \Omega'}\big((\oplus_{m'_r| m_r, m'_r \neq m_r}\Z[\zeta_{m'_r}] )\oplus
\mathfrak{a}_r\big). \label{J}
\end{gather}
\newline

\par Note that as a $\Z$-module, $I$ has rank $\sum_{s \in \Omega'}m_s$
and $ \oplus_{m'_s|m_s, s \in O'} F_{m'_s}(I)$ has rank \linebreak{$\sum_{s
  \in \Omega'} \sum_{m'_s|m_s}[\Q(\zeta_{m'_s}):\Q]$}. These ranks are
equal since $m_s=\sum_{m_s'|m_s}[\Q(\zeta_{m'_s}):\Q]$.
\par We may now proceed as we did in Section \ref{Lenstra}, still
working under the assumption that $\pi_s$ is cyclic for every prime
power $s$ dividing $|B|$, to conclude that $K(A)$ is rational over $K$
if the ideal $\mathfrak{a}_s$ is principal for each
$s$ dividing $|B|$. For we have
\newline
\begin{equation}
L(I)^{\pi} \cong L(\oplus_{s \in O', m'_s|m_s}F_{m'_s}(I))^{\pi} \cong
L(\oplus_{s \in O'} (\oplus_{ m'_s|m_s, m'_s\neq m_s} F_{m'_s}(\oplus_{r
  \in \Omega'}\Z[\pi_r]))
\oplus  \mathfrak{a}_s)^{\pi}, \label{a}
\end{equation}
\newline
where the first isomorphism is from \eqref{I} and the second is a
consequence of Proposition \ref{0Len}, line \eqref{J}, and
\cite[Proposition 2.3]{Lenstra}.
\par Suppose $\mathfrak{a}_s$ is principal. Then
$\mathfrak{a}_s\cong \Z[\zeta_{m_s}]\cong F_{m_s}(\Z[\pi_s])$. Consider the $\Z[\pi]$-module $M:=\oplus_{s \in \Omega'}
\Z[\pi_s]$. From \cite[Corollary 2.5]{Lenstra} and \cite[Proposition 2.1]{Lenstra}, we
conclude that $L(M)^{\pi}\cong \nolinebreak L(\oplus_{s \in \Omega',
  m'_s|m_s}\Z[\zeta_{m'_s}])^{\pi}$. Using \eqref{a} and Proposition \ref{0Len}, we find that
$L(I)^{\pi}\cong L(M)^{\pi}$. The $\Z[\pi]$-module $M$ is a
$\pi$-permutation module, from which it follows that $L(M)^{\pi}$, and
thus $L(I)^{\pi}$, is
rational over $l$ \cite[Proposition 1.4]{Lenstra}. At last we obtain:
\begin{thm}[\cite{Lenstra}]\label{genthm}
Using the notation above, assume that $\pi_s$ is cyclic for every
prime power $s$ dividing $|B|$. If $\mathfrak{a}_{s}$ is principal for
all $s \in \Omega'$,
then $K(A)/K$ is rational. 
\end{thm}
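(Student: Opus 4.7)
The plan is to reduce the rationality of $K(A)/K$ to that of $L(M)^{\pi}/K$ for a suitably chosen $\pi$-permutation module $M$, and then invoke \cite[Proposition 1.4]{Lenstra}. Since the general version of Proposition \ref{prop Jp} alluded to earlier shows that $K(A)$ is a rational extension of $L(I)^{\pi}$, it suffices to prove $L(I)^{\pi}/K$ is rational. I would take $M = \bigoplus_{s \in \Omega'} \Z[\pi_s]$, viewed as a $\Z[\pi]$-module through the quotient maps $\pi \twoheadrightarrow \pi_s$. This $M$ is a $\pi$-permutation module because each $\Z[\pi_s]$ has its standard $\Z$-basis permuted by $\pi$, so once $L(I)^{\pi} \cong L(M)^{\pi}$ is established, \cite[Proposition 1.4]{Lenstra} supplies the desired rational tower $L(M)^{\pi}/L^{\pi} = L(M)^{\pi}/K$.

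To establish $L(I)^{\pi} \cong L(M)^{\pi}$, I would compute both sides in the common form $L\bigl(\bigoplus_{s \in O',\, m'_s \mid m_s} F_{m'_s}(N)\bigr)^{\pi}$. For $N = I$, equations \eqref{I} and \eqref{J} already supply the decomposition
\[
\bigoplus_{r \in \Omega'} \Bigl( \Bigl( \bigoplus_{m'_r \mid m_r,\, m'_r \neq m_r} \Z[\zeta_{m'_r}] \Bigr) \oplus \mathfrak{a}_r \Bigr).
\]
For $N = M$, the analogous computation via Proposition \ref{0Len} and \cite[Proposition 2.3]{Lenstra} produces $\bigoplus_{r \in \Omega'} \bigoplus_{m'_r \mid m_r} \Z[\zeta_{m'_r}]$. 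The principality hypothesis is precisely what bridges the two: each $\mathfrak{a}_s$ is a rank-one free $\Z[\zeta_{m_s}]$-module, so $\mathfrak{a}_s \cong \Z[\zeta_{m_s}] \cong F_{m_s}(\Z[\pi_s])$, and this identification is $\Z[\pi]$-equivariant with both sides carrying the $\pi$-action coming through $\psi_{m_s}$. Substituting into the decomposition for $I$ matches it term by term with that for $M$, giving the required $\Z[\pi]$-isomorphism and hence, by two applications of Theorem \ref{L thm}, the equality $L(I)^{\pi} \cong L(M)^{\pi}$.

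The main obstacle I anticipate is the careful tracking of the $\pi$-action through the various functorial identifications. The isomorphism $\mathfrak{a}_s \cong F_{m_s}(\Z[\pi_s])$ must be genuinely $\Z[\pi]$-equivariant rather than merely an isomorphism of $\Z[\zeta_{m_s}]$-modules, and the application of the $F_{m'_s}$ decomposition to $I$ requires the projectivity of $J_s$, which is supplied by \cite[Proposition 3.3]{Lenstra} for odd $s$ and underlies the restriction to $\Omega'$. Once these bookkeeping issues are in order, the conclusion is immediate from Lenstra's rationality result for permutation modules.
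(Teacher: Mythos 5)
Your proposal is correct and follows essentially the same route as the paper: reduce to $L(I)^{\pi}$, compare the $F_{m'_s}$-decompositions of $I$ and of $M=\oplus_{s\in\Omega'}\Z[\pi_s]$ using Proposition \ref{0Len}, line \eqref{J}, and \cite[Proposition 2.3]{Lenstra}, use principality to identify $\mathfrak{a}_s\cong F_{m_s}(\Z[\pi_s])$, and conclude via Theorem \ref{L thm} and Lenstra's rationality result for permutation modules. The bookkeeping points you flag ($\pi$-equivariance of the identifications and projectivity of $J_s$ for odd $s$) are exactly the ones the paper relies on.
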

\begin{remark}
Lenstra's version of Theorem \ref{genthm} is stronger than
what we have written; what is actually true is that if
$\mathfrak{a}_s^{n_s}$ is principal for all $s \in \Omega'$, then $K(A)/K$
is rational, where $n_s$ is the multiplicity of $s$ in $\Omega'$. This strengthening ultimately comes from the fact that
$\mathfrak{a}_s^{n_s}$ is principal ideal in $\Z[\zeta_{m_s}]$ if and only if
$\mathfrak{a}_s \oplus \cdots \oplus \mathfrak{a}_s$
($n_s$ summands) is a free $\Z[\zeta_{m_s}]$-module \cite{kaplded}. 
\end{remark}
We can now bound $\gon(K,A)$ analogously to the way we bounded
$\gon(\Q, \Z/p\Z)$. The field $K(A)$ is isomorphic to a purely transcendental
extension of $L(I)^{\pi}$, so we can write $K(A)\cong L(I)^{\pi}(T_1,
\ldots, T_k)$, where $T_1, \ldots, T_k$ are indeterminates and $k$
depends on $K$ and $A$. If $\mathfrak{I}_s$ is
a principal ideal contained in $\mathfrak{a}_s$, then it follows
from Lemma
\ref{M} and the discussion of the case $K=\Q$, $A=\Z/p\Z$ that
\begin{gather*}
[L(\oplus_{s \in O'}(\oplus_{m'_s|m_s, m'_s\neq m_s} F_{m'_s}(\oplus_{r
  \in \Omega'}\Z[\pi_r]))
\oplus \mathfrak{a}_s)^{\pi}:\\L(\oplus_{s \in O'}
( \oplus_{m'_s|m_s, m'_s\neq m_s} F_{m'_s}(\oplus_{r \in \Omega'}\Z[\pi_r]))
\oplus \mathfrak{I}_s)^{\pi}]=
\prod_{s \in \Omega'}
|\mathfrak{a}_s:\mathfrak{I}_s|.
\end{gather*}
The following Theorem now follows in analogy with the $K=\Q$,
$A=\Z/p\Z$ case.

\begin{thm} \label{GenAb}
Let $A$ be an abelian group and $K$ a field for which $K(\zeta_s)/K$
is cyclic for every prime power $s$ that divides $|A|$ and is prime to
the characteristic of $K$. Then, using the notation above, 
\[
\gon(K,A) \leq \prod_{s\in
  \Omega'}\min |\mathfrak{a}_s:\mathfrak{I}_s|,
\]
 where the minimum is taken
over
all principal ideals $\I_s \subseteq \mathfrak{a}_s$ for each $s \in \Omega'$. 
\end{thm}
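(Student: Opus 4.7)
The plan is to carry out the strategy of Section \ref{3.1} in the general abelian setting: for each choice of principal ideals $\I_s \subseteq \mathfrak{a}_s$, construct a rational subfield of $K(A)$ of index equal to the prescribed product. Write $L_{\mathfrak{a}}^{\pi}$ for the right-hand side of \eqref{a}, and let $L_{\I}^{\pi}$ denote the same expression with each $\mathfrak{a}_s$ replaced by $\I_s$.

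First I would verify that $L_{\I}^{\pi}$ is rational over $K$. Since each $\I_s$ is a free $\Z[\zeta_{m_s}]$-module of rank $1$, we have $\I_s \cong \Z[\zeta_{m_s}] \cong F_{m_s}(\Z[\pi_s])$ as $\Z[\pi]$-modules. Inserting this isomorphism into $L_{\I}^{\pi}$ allows the exact argument given between \eqref{a} and the statement of Theorem \ref{genthm} to go through verbatim: one identifies $L_{\I}^{\pi}$ with $L(M)^{\pi}$ for the $\pi$-permutation module $M = \bigoplus_{s \in \Omega'}\Z[\pi_s]$ and then invokes \cite[Proposition 1.4]{Lenstra} to conclude rationality over $l^{\pi} = K$.

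Next I would bound the degree $[L_{\mathfrak{a}}^{\pi} : L_{\I}^{\pi}]$. Since $\pi$ acts faithfully, elementary Galois theory gives $[L_{\mathfrak{a}}^{\pi}:L_{\I}^{\pi}] = [L_{\mathfrak{a}} : L_{\I}]$. The two fields share all summands except for $\mathfrak{a}_s$ versus $\I_s$; using $l(M\oplus N) = l(M)(N)$ for free $\Z$-modules $M, N$, these common summands may be adjoined to a common base, after which one is left computing $[l(\bigoplus_s \mathfrak{a}_s) : l(\bigoplus_s \I_s)]$. Applying Lemma \ref{M} to the finite-index inclusion of free $\Z$-modules $\bigoplus_s \I_s \subseteq \bigoplus_s \mathfrak{a}_s$ then produces
\[
[L_{\mathfrak{a}} : L_{\I}] = \Big|\bigoplus_s \mathfrak{a}_s : \bigoplus_s \I_s\Big| = \prod_{s \in \Omega'} |\mathfrak{a}_s : \I_s|.
\]

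Finally, since $K(A)$ is purely transcendental over $L(I)^{\pi} \cong L_{\mathfrak{a}}^{\pi}$, I can write $K(A) \cong L_{\mathfrak{a}}^{\pi}(T_1, \ldots, T_k)$ for suitable indeterminates $T_i$. Then $L_{\I}^{\pi}(T_1, \ldots, T_k)$ is rational over $K$, sits inside $K(A)$, and
\[
[K(A) : L_{\I}^{\pi}(T_1, \ldots, T_k)] = [L_{\mathfrak{a}}^{\pi} : L_{\I}^{\pi}] = \prod_{s \in \Omega'}|\mathfrak{a}_s : \I_s|.
\]
Minimizing over the choice of $\I_s$ yields the bound on $\gon(K,A)$. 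The main obstacle is the bookkeeping in the degree computation: ensuring that the various substitutions of $\I_s$ for $\mathfrak{a}_s$ across distinct $s$ do not interact, so that Lemma \ref{M}'s multiplicativity over independent free $\Z$-modules transfers cleanly to a product of local indices.
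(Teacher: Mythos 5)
Your proposal is correct and follows essentially the same route as the paper: the paper's proof is precisely the analogy with the $K=\Q$, $A=\Z/p\Z$ case of Section \ref{3.1}, replacing each $\mathfrak{a}_s$ in \eqref{a} by a principal $\I_s\cong F_{m_s}(\Z[\pi_s])$, using the permutation-module argument for rationality of the resulting fixed field, and computing the index via the faithfulness of $\pi$ and Lemma \ref{M}, with the degree preserved after adjoining the indeterminates $T_1,\ldots,T_k$. You have simply made explicit the bookkeeping the paper leaves to the reader, and it goes through as you describe.
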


\begin{remark}\label{course}
In the course of proving Theorem \ref{GenAb}, we have shown that
\[
\gon(l(I)^{\pi})\leq  \prod_{s\in
  \Omega'}\min_{\I_s \subseteq \mathfrak{a}_s, \textrm{ }\I_s\emph{ principal}}|\mathfrak{a}_s:\mathfrak{I}_s|.
\]
\end{remark}

\subsection{An Example of Theorem \ref{GenAb}}\label{ex}
As an example of Theorem \ref{GenAb}, we translate the result into a
numerical bound for the case $K=\Q$ (note that if $\gon(\Q,A)=d$, and
$K$ is a number field, then $\gon(K,A)\leq d$ because if $E$ is a
rational field over $\Q$ with $[\Q(A):E]=d$, then $K \otimes E$ is a rational
field  over $K$ with
$[K(A):K\otimes E]=d$). The
Sylow-$l$ subgroups of $A$ for a prime $l$ dividing $|A|$ can be dealt with
independently. Thus we take a prime $l$, and for the remainder
of Section \ref{ex}, we assume that $A$ is an
$l$-group. We write  
\[A=(\Z/l^{u_1}\Z)^{v_1} \oplus \cdots \oplus
 (\Z/l^{u_b}\Z)^{v_b}
\] 
with $u_i < u_{i+1}$, so $l^{u_i}$ appears with
 multiplicity $v_i$ in $\Omega'$. Let $s=l^{u_i}$. We have \linebreak
$m_{l^{u_i}}= \phi(s)=
l^{u_{i-1}}(l-\nolinebreak 1)$.  For each $s \in \Omega'$, we take
 $\I_s=(\zeta_{m_s}-t)$, where $t$ is an integer whose reduction modulo
 $l$ generates $(\Z/l\Z)^*$. 
\par From line \eqref{Obnd} in Section
 \ref{3.1}, we have $t=O(l^{1/4})$. Let $s=l^u$, so
 $\phi(s)=l^{u-1}(l-1)$. Since $\mathfrak{a}_s$ has norm $l$, we find that for each $s$,
 \begin{equation}\label{Ogen}
|\mathfrak{a}_s:\I_s|=N(\I_s)/N(\mathfrak{a}_s)\leq
O\Big(l^{\frac{\phi(\phi(s))-4}{4}}\Big).
\end{equation}

\begin{remark}
Let $h_{n}$ denote the class number of $\Q(\zeta_{n})$. Since
$\mathfrak{a}_s^{h_{\phi(s)}}$ is principal, letting \linebreak $h=h_{\phi(s)}$,
we obtain the bound
\begin{equation}\label{cl}
\min_{\I_s \emph{ principal, }\I_s\subseteq a_s}|\mathfrak{a}_s:\I_s| \leq |\mathfrak{a}_s:\mathfrak{a}_s^{h}|=N(\mathfrak{a}_s)^{h-1}=l^{h-1}.
\end{equation}
Recall that the class number $h_n^+$ of the maximal real subfield of
$\Q(\zeta_n)$ divides $h_n$. Much is known about the quotient
$h_n/h_n^+:=h_n^{-}$, while comparatively little is known about $h_n^+$
(these quantities are discussed in detail in \cite{Wash}, for
example). 
From \cite[Theorem 4.20]{Wash}, we have 
\begin{equation}\label{hn}
h_n^-\sim n^{\frac{1}{4}\phi(n)}.
\end{equation}
Taking $n=l^{u-1}(l-1)$ and using the bound \eqref{hn} in place of $h$
in \eqref{cl} (which is cheating, of course, since \eqref{hn} does not
account for $h^+$) already gives a much larger bound for
$\min_{\I_s \emph{ principal, }\I_s\subseteq a_s}|\mathfrak{a}_s:\I_s|$ than we obtain via \eqref{Ogen}.
\par To the author's knowledge, there is no known asymptotic formula (or
even non-trivial lower bound) for $h_n^+$. 
\end{remark}

\par Thus we find that  $ \prod_{s\in
  \Omega'}\min|\mathfrak{a}_s:\mathfrak{I}_s| $ can be bounded above by
$O(l^C)$, where, by \eqref{Ogen}, we may take $C$ to be
\begin{equation}\label{C}
\frac{1}{4}\left(\sum_{i=1}^{b}v_i\big(l^{u_i-2}(l-1)\phi(l-1)-4)\right).
\end{equation}

 We can also obtain an effective upper bound
for $\min_{\I_s \textrm{principal, } \I_s \subseteq \mathfrak{a}_s} |\mathfrak{a}_s:\I_s|$ for each $s \in \Omega'$, and thus for
$\gon(\Q,A)$, by means of
\eqref{rtbnd} from Section \ref{Obnd}. We obtain
\begin{equation} \label{C'}
\min_{\I_s \textrm{principal, } \I_s \subseteq \mathfrak{a}_s} |\mathfrak{a}_s:\I_s| \leq \frac{1}{l}\left(\frac{1}{2}l^{1/2}+1\right)^{\phi(\phi(s))}.
\end{equation}

\begin{remark}
One checks that the bound for $\gon(\Q, A)$ coming from \eqref{C'}
(and thus from \eqref{C})
beats the bound one gets from the rational field 
$\Q(x_1,\ldots ,x_{|A|})^{S_{|A|}}$, of which $\Q(A)$ is a degree
$(|A|-1)!$ extension. 
\end{remark}

\section{Rational Subfields of K(A)} \label{secbelow}
Let $A$ be an abelian group and $K$ a field, and suppose there is a
rational field $K'$ inside $K(A)$. To simplify the presentation, we will
assume that $char K$ is prime to $|A|$; if \linebreak $\gcd(char K, |A|)\neq
1$, the results of this section are easily modified since $K(A)$ is rational over
$L(I)^{\pi}$ (notation as in Section \ref{generalcase}).  Ideally, we would like a lower
bound for $[K(A):K']$ in terms of $K$ and $A$, which would provide a lower bound for
$\gon(K,A)$. In this section, we investigate certain rational
subfields of $K(A)$, and discuss how various hypthoses lead to
conditional lower bounds for $\gon(K,A)$. 
\par We begin by setting notation. Let $\Omega'=\{s_1, \ldots, s_n\}$ be the
odd elementary divisors of $|A|$, and let $\bar{\Omega}$ be a maximal subset
among all subsets of $\Omega'$ whose elements are distinct. Let $I=\oplus_{s \in
  \Omega'}J_s$, as defined in
Section \ref{generalcase}. For $s \in \Omega'$, set $\pi_s=\Eva(K(\zeta_s)/K)$, with order
$m_s$. Let $e$ be the exponent of $|A|$, let $l=K(\zeta_e)$, and let
$\pi=\Eva(l/K)$.  Define the following sets:
\begin{align*}
 U&=\{x_{1;0},\ldots, x_{1;s_1-1}, \ldots \ldots, x_{n;1}, \ldots, x_{n;s_n-1}\},
 \quad |U|=\sum s_i \\
S&=\{x_{1;0}, \ldots, x_{1;m_{s_1}-1}, \ldots \ldots, x_{n;0}, \ldots,
x_{n;m_{s_n}-1}\}, \quad |S|=\sum m_{s_i}\\
T&=U\setminus S, \quad |T|=|U|-|S|.
\end{align*}
In order for our methods to work,
we need to make the following assumptions, which we do for the
remainder of Section \ref{secbelow}:
\begin{gather}\label{test}
\textrm{For all s }\in \Omega', \textrm{ either
  }m_s=[K(\zeta_{s}):K] \textrm{ is even
  or } \zeta_{s}\in K. \\ \label{test2}
\textrm{For all } s \in \Omega, K(\zeta_s)/K \textrm{ is cyclic. } 
\end{gather}

\begin{figure}[h]
\caption{}\label{diamond1}
\begin{center}
  \setlength{\unitlength}{2.5cm}
  
 \begin{picture}(.3,1.5)(3,.5)
    
    \put(2.75,.6){$K(S)$}
    \put(4.5,1.2){$l(I)^{\pi}$}
    \put(1.7,1.2){$K':=K(S,T)$}
    \put(2.73, 1.8){$l(I)^{\pi}(T)$}
     \put(2.58,1.48){$d$}
      \put(3.75,.8){$d$}
    \put(3.87,1.52){\footnotesize{rational}}
     \put(2.2,.85){\footnotesize{rational}}

    \qbezier(2.9,.8)(2.9,.8)(2.6,1.15)
      \qbezier(3.2,.8)(4.4,1.15)(4.45, 1.15)
    \qbezier(2.6, 1.35)(2.6,1.35)(2.95,1.74)
    \qbezier (4.4, 1.25)(4.4,1.25)(3.17, 1.74)

\end{picture}

\end{center}
\end{figure}
 We take the
subset $S_i:=x_{i;0}, \ldots, x_{i;m_{s_i}-1}$ of $S$ to be a maximal
transcendence basis for $l(J_{s_i})^{\pi}$ over
$K$. We have $d:=[l(I)^{\pi}:K(S)]\leq \gon(l(I)^{\pi})$.  By the
generalization in \cite{Lenstra} of our Proposition
\ref{prop Jp}, $K(A)$ is a rational extension of a subfield isomorphic to
$l(I)^{\pi}$, which we identify with $l(I)^{\pi}$. We may take
$l(I)^{\pi}(T)=K(A)$. In \cite{Lenstra}, Lenstra
establishes that $l(I)^{\pi}$ is rational if and only $K(A)$ is
rational (one direction being obvious). In other words, $\gon(K,A)=\gon(l(I)^{\pi})$  in the case that $\gon(K,
\Z/p\Z)=1$, and we might speculate that $\gon(K,A)=\gon(l(I)^{\pi})$ in
all cases.

\par Let $m_e=[l:K]$. For each $k$, $1\leq k \leq n$, define
\[
z_{k;i}=\sum_{j=0}^{s_k-1}\zeta_{s_k}^{ij}x_{k;j}.
\]
This is the discrete Fourier transform of vector spaces:
$\oplus_{0\leq i \leq s_k-1} lx_{k;i} \to \oplus_{0\leq i \leq s_k-1}
lz_{k;i}$, so for each $k$, $l(x_{k;0}, \ldots, x_{k;s_k-1})=l(z_{k;0}, \ldots, z_{k;s_k-1})$. The
group $\pi$ acts, via its quotient $\pi_{s_k}$, on $l(z_{k;0}, \ldots,
z_{k:s_k-1})$. Let 
\begin{align*}
I'&=\textrm{ Free abelian group on all the }z_{k;i}, \textrm{ and}\\
I'_k&= \textrm{ Free abelian group on the set }\{z_{k;i}\}_{0 \leq i
    \leq s_k-1},\\
\textrm{so} & \textrm{ that } I'=\oplus_{1\leq k \leq n}I'_k.
\end{align*}
 The group $\pi_{s_k}$ is isomorphic to a subgroup of
$\Eva(\Q(\zeta_{s_k})/\Q)$, which cyclically permutes the set
$\{z_{k;i}\}_{\gcd(i,s_k)=1}$. Therefore, as a $\Z[\pi_{s_k}]$-module,
$I'_k$ contains at least one copy of $\Z[\pi_{s_k}]$. Since $\pi$
permutes the $z_{k;i}$, $I'$ is a $\Z[\pi]$-permutation module and
$I'$ decomposes into a direct sum of $\Z[\pi]$-permutation modules
with each summand having a $\Z$-basis on which $\pi$ acts
transitively. Therefore, recalling that $F_{m_s}(\Z[\pi_s])\cong
\Z[\zeta_{m_s}]$ and using \cite[Corollary 2.5]{Lenstra}, we have

\begin{equation} \label{exp}
l(I')^{\pi}\cong l(\oplus_{s \in O',
  m_s'|m_s}F_{m_s'}(I'))^{\pi}=l(\oplus_{s \in \bar{\Omega}}
\Z[\zeta_{m_s}]\oplus R)^{\pi},
\end{equation}

\noindent where $R$ just denotes the direct sum of the remaining summands.

\par Now define 
\[I''=I\oplus \Z\langle T \rangle,
\]
 where $\Z\langle T
\rangle $ denotes the (multiplicative) free abelian group on the set $T$. The group
$\pi$ acts trivially on $T$ and $\Z$ acts by taking powers. Note that if
$\zeta_{s_k}\in K$, then $\pi$ acts trivially on $J_{s_k}$, so by
construction, $l(J_{s_k})=l(J_{s_k})^{\pi}=l(x_{k;0})$. We
have the diagram of fields in Figure \ref{diamond2}, which follows from
Figure \ref{diamond1}.

\begin{figure}[h]
\caption{}\label{diamond2}
\begin{center}
  \setlength{\unitlength}{2.5cm}
  
 \begin{picture}(.3,1.4)(3,.6)
    
    \put(2.55,.6){$K'=l(I')^{\pi}$}
    \put(3.6,1.2){$K'(\zeta_e)=l(I')$}
    \put(.75,1.2){$K'l(I)^{\pi}=l(I)^{\pi}(T)=l(I'')^{\pi}$}
    \put(2.93, 1.8){$l(I,T)=l(I'')$}
   \put(2.25, 1.7){$m_e$}
   \put(3.5,.9){$m_e$}
   \put(3.5, 1.55){$d$}
   \put(2.16,.77){$d$}

    \qbezier(2.9,.8)(2.9,.8)(1.8,1.1)
     \qbezier(3.2,.8)(3.2,.8)(3.55, 1.1)
    \qbezier(1.8, 1.41)(1.8,1.41)(2.95,1.74)
    \qbezier (3.58, 1.45)(3.58,1.45)(3.13, 1.74)

\end{picture}

\end{center}
\end{figure}

By
\cite[Corollary 2.5]{Lenstra}, we have 
\begin{equation}\label{exp2}
l(I'')^{\pi}\cong l(\oplus_{s \in O',
  m_s'|m_s}F_{m_s'}(I \oplus \Z\langle T \rangle))^{\pi}.
\end{equation}
It follows from Proposition \ref{0Len} that $F_m(\Z \langle T \rangle)=0$ unless $m=1$, in
which case
\linebreak $F_m(\Z \langle T \rangle)=\Z^{|T|}$. Recalling that $I=\oplus_{s\in
  \Omega'} J_s$ and using line \eqref{J} from Section \ref{generalcase}, we have
\begin{equation}\label{exp3}
\oplus_{s \in O', m'_s|m_s}
F_{m'_s}(I \oplus \Z \langle T \rangle)=
\big(\oplus_{s \in
  O'}(\oplus_{m'_s\neq m_s}\Z[\zeta_{m'_s}]) \oplus
\mathfrak{a}_s \big) \oplus \Z^{|T|}.
\end{equation}

\par We would like to say something about $d$,
which we will do by working with $[l(I''):l(I')]$. We have $l(I_k')\subseteq l(J_{s_k} \oplus T_k)$,
where $T_k$ is the free abelian group on $\{x_{k;m_{s_k}},\ldots ,
x_{k;s_k-1} \}$. Since $I_i'$ and $I_j'$ are algebraically independent
(viewed as subsets of $l(I')$) for $i \neq j$, as are $J_{s_i}\oplus T_i$
and $J_{s_j}\oplus T_j$ (viewed as subsets of $l(I'')$), we have
\begin{equation*} 
[l(I''):l(I')]=\prod_{1\leq k \leq n}[l(J_{s_k} \oplus T_k):l(I_k')].
\end{equation*}

If $\zeta_{s_k} \notin K$, then
by assumption \eqref{test} at the beginning of Section \ref{secbelow},
we know that \newline
$m_{s_k}=[K(\zeta_s):K]$ is even. Let $d_k=[l(J_{s_k} \oplus
T_k):l(I_k')]$, and note that $d_k=1$ if
$\zeta_{s_k}\in K$.  
\subsection{Conditional Lower Bounds for $d_k$}
From \cite[Corollary 2.5]{Lenstra}, we have
\begin{equation} \label{cond}
l(I'_k)\cong
l(\oplus_{m'_{s_k}|m_{s_k}}F_{m'_{s_k}}(I'_k))\cong\big((\oplus_{m'_{s_k}|m_{s_k}}\Z[\zeta_{m'_{s_k}}])\oplus R_k\big),
\end{equation}
where $R_k$ denotes the remaining $\pi$-module summands
(\textit{cf}. line \eqref{exp}). 

Similarly, we have
\begin{equation} \label{condl}
l(J_{s_k} \oplus T_k)\cong
l\big(\oplus_{m'_{s_k}|m_{s_k}} F_{m'_{s_k}}(J_{s_k}\oplus T_k)\big)\cong l\big(\mathfrak{a}_{s_k}\oplus(\oplus_{m'_{s_k}|m_{s_k},m'_{s_k}\neq
m_{s_k}} \Z[\zeta_{m'_{s_k}}])\oplus \Z^{|T_k|}\big),
\end{equation}
 (\textit{cf}.
lines \eqref{exp2} and \eqref{exp3}). Note that in \eqref{cond} and
\eqref{condl}, both isomorphisms
respect $\pi$, the first isomorphism in each by \cite[Corollary 2.5]{Lenstra}, and the
second by \cite[Proposition 3.6]{Lenstra}
and Proposition \ref{0Len}. Set
\[
N=\mathfrak{a}_{s_k} \oplus (\oplus_{m'_{s_k}|m_{s_k},m'_{s_k}\neq
m_{s_k}} \Z[\zeta_{m'_{s_k}}])\oplus \Z^{|T_k|}, 
\textrm{ and } N'=(\oplus_{m'_{s_k}|m_{s_k}}\Z[\zeta_{m'_{s_k}}])\oplus R_k.
\]
We have that $l(N')$ is isomorphic (via an isomorphism that respects
$\pi$) to a subfield of $l(N)$, with
$[l(N):l(N')]=d_k$, and we assume without loss of
generality $l(N')\subseteq l(N)$. 
\par Recall that the cyclic group $\pi$ acts through its quotient $\pi_{s_k}$
on $l(N)$, acting on each
direct summand separately, and within each summand, acting by permuting
monomials. Set $w=m_{s_k}$, and let $M$ be a copy of $\Z[\zeta_{w}]$
in $N'$. Let $\sigma$ be a generator of
$\pi$. As $M$ is a free
$\Z[\zeta_w]$-module of rank 1, there exists an element $f \in l(N)$
so that 
\[
l(M)=l(f,f^{\sigma}, \ldots, f^{\sigma^{r-1}}),
\] where $r=\phi(w)$. The action of
$\sigma$ on $f$ is given by viewing $f$ as an element of
$l(N)$, on
which $\sigma$ acts as a field automorphism. Note that although
$\sigma$ is an automorphism of $l(N)$ that restricts naturally to
$l(N')$, when viewed as $\Z[\zeta_{w}]$-modules, the
action of $\Z$ on $N$ (taking powers of elements of $N$) is not compatible
with the action of $\Z$ on $N'$ (taking powers of elements of
$N'$). The element $\sigma^{\frac{w}{2}}$ acts
on $f$ as inversion since $\zeta_{w}^{w/2}=-1$ (recall that
$w$ is assumed to be even). We can
identify $l(N)$ with $l(y_0, \ldots ,
y_{s_k-1})$ for indeterminates $y_i$, so that we may write $f=\frac{g}{h}$ with $g, h \in B:=l[y_0^{\pm 1}, \ldots , y_{s_k-1}^{\pm
  1}]$, where $g$ and $h$ have no common factors (recall that
$B$ is a unique factorization
domain with unit group equal to the group of monomials). Setting
$a=\frac{w}{2}$, we have
\begin{equation*}  \frac{h}{g}=f^{-1}=f^{\sigma^{a}}
=\frac{g^{\sigma^{a}}}
 {h^{\sigma^{a}}},
\end{equation*}
so, up to units, $h=g^{\sigma^{a}}$. 
\par If $f \in B^*$, we have the following conditional results.

\subsubsection{The Case $f \in B^*$}\label{case1}

\textit{All results in this section are only valid under the
  assumption that $f \in B^*$}.\\
\par For the moment we additionally assume:
\begin{equation}\label{assume}
\textrm{If } s \in \Omega' \textrm{ and } \zeta_{m_s}\notin K,
\textrm{ then the element } s \textrm{ appears in } \Omega' \textrm{
  exactly once }.
\end{equation}
Here, $f$ must be a monomial in $l[y_0^{\pm 1}, \ldots ,
  y_{s_k-1}^{\pm 1}]$. If $f$ has coefficient $\alpha
  \in l$, then taking $f':=\frac{1}{\alpha}f$, we may replace $M$ by
  $M'$, where $M'$ is the $\Z[\pi]$-module generated by $f'$. So $M'$ is contained in $N$. 
\par
Write
$f'=\prod_{i=0}^{s_{k-1}}y_i^{a_i}$, where we may assume that $y_{0} , \ldots , y_{r-1}$
correspond to $\mathfrak{a}_{s_k}$ (recall $r=\phi(w)$). We claim that $a_i=0$ for all $r\leq i
\leq s_k-1$, and thus $M' \subseteq \mathfrak{a}_{s_k}$. To
see this, note that if $\tau$ is the $w$th cyclotomic polynomial in $\Z[\sigma]$, then
$f'^{\tau}=1$. The element $\tau$ acts on monomials in the
variables $y_{b_i},
\ldots, y_{b_{i+1}-1}$, where these groupings correspond to separate
summands of $N$ (so, for example, $b_0=0, b_1=r$). Therefore
$\tau$ acts trivially on each subproduct $y_{b_i}^{a_{b_i}}\cdots
y_{b_{i+1}-1}^{a_{b_{i+1}-1}}$. But this only holds if $i=0$
or if each exponent in the subproduct is zero since $y_{b_j}$, for
$b_j\geq r$,
satisfies $y_{b_j}^{\Phi_{w'}(\sigma)}=1$, for some $w' \neq w$, and
all cyclotomic polynomials are irreducible. We conclude that $M'$ is a
principal ideal contained in $\mathfrak{a}_{s_k}$. 

\begin{remark}
If $s_k$ occurred with multiplicity $n_k$ in $\Omega'$, then we would
only be assured that $M'$ is contained in $\mathfrak{a}_{s_k}^{n_k}$
(direct sum). 
\end{remark}

\begin{thm}\label{ckdk}
Let $A$ be an abelian group with elementary divisor decomposition \linebreak
$A=\oplus_{k \in \Omega} \Z/s_k\Z$, $K$ a field, and suppose
assumptions \eqref{test}, \eqref{test2}, and \eqref{assume} hold. For each $s_k$, let $c_k$ denote the minimum over all
principal ideals $\I_k \subseteq \mathfrak{a}_{s_k}$ of
$|\mathfrak{a}_{s_k}:\I_k|$. Then, using the notation above, $c_k \leq d_k$.
\end{thm}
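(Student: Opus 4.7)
The construction preceding the theorem has already produced a principal $\Z[\zeta_w]$-module $M' = \alpha^{-1}M$ (with $\alpha \in l$) satisfying $M' \subseteq \mathfrak{a}_{s_k}$, i.e.\ a principal ideal contained in $\mathfrak{a}_{s_k}$. Since $c_k$ is by definition the minimum of $|\mathfrak{a}_{s_k}:\I_k|$ over all principal $\I_k \subseteq \mathfrak{a}_{s_k}$, we immediately have $c_k \leq |\mathfrak{a}_{s_k}:M'|$, so the theorem reduces to proving $|\mathfrak{a}_{s_k}:M'| \leq d_k$.

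The plan is to convert both sides into field degrees inside $l(N)$ and compare them by forming a compositum. Since $M' \subseteq \mathfrak{a}_{s_k}$ are both free $\Z$-modules of rank $r = \phi(w)$, Lemma \ref{M} gives
\[
[l(\mathfrak{a}_{s_k}):l(M')] = |\mathfrak{a}_{s_k}:M'|.
\]
Moreover, $l(M') = l(M) \subseteq l(N')$, because $\alpha \in l$ forces $l(M') = l(M)$, and $M$ is by construction a submodule of $N'$; similarly $M' \subseteq \mathfrak{a}_{s_k} \subseteq N$ gives $l(M') \subseteq l(\mathfrak{a}_{s_k}) \subseteq l(N)$. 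Thus $l(M')$ is a common subfield of $l(\mathfrak{a}_{s_k})$ and $l(N')$ inside $l(N)$.

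Next I would form the compositum $E := l(\mathfrak{a}_{s_k}) \cdot l(N') \subseteq l(N)$ and prove
\[
[E : l(N')] = [l(\mathfrak{a}_{s_k}) : l(M')].
\]
The key input is that $l(N')/l(M')$ is purely transcendental: the decomposition $N' = M \oplus N'_{\mathrm{rest}}$ made visible by \eqref{cond} shows that any $\Z$-basis $u_1, \ldots, u_{s_k-r}$ of the complementary summand $N'_{\mathrm{rest}}$ yields $l(N') = l(M')(u_1, \ldots, u_{s_k-r})$ with the $u_i$ algebraically independent over $l(M')$. Since $l(\mathfrak{a}_{s_k})/l(M')$ is finite algebraic, transcendence degree considerations show the $u_i$ remain algebraically independent over $l(\mathfrak{a}_{s_k})$; hence $E = l(\mathfrak{a}_{s_k})(u_1, \ldots, u_{s_k-r})$ and the displayed equality follows from the standard fact that adjoining independent indeterminates does not change the degree of a finite extension. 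Combining these,
\[
|\mathfrak{a}_{s_k}:M'| = [E:l(N')] \leq [l(N):l(N')] = d_k,
\]
which is the required bound.

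The main point that deserves care is verifying that the $\Z$-basis of $N'_{\mathrm{rest}}$ used above really is algebraically independent over the \emph{larger} field $l(\mathfrak{a}_{s_k})$ and not merely over $l(M')$, since it is this step that uncouples the purely transcendental extension from the algebraic one inside $l(N)$; but this is an immediate consequence of the algebraicity of $l(\mathfrak{a}_{s_k})/l(M')$, so no serious obstacle arises.
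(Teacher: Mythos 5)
Your proof is correct and takes essentially the same route as the paper's, which is just the chain $c_k \le |\mathfrak{a}_{s_k}:M'| = [l(\mathfrak{a}_{s_k}):l(M')] \le [l(N):l(N')] = d_k$ using the principal module $M'\subseteq\mathfrak{a}_{s_k}$ constructed before the theorem. The paper asserts the middle inequality without comment; your compositum argument (base-changing the finite extension $l(\mathfrak{a}_{s_k})/l(M')$ along the purely transcendental extension generated by a basis of the complement of $M$ in $N'$) is precisely the justification left implicit there.
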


\begin{proof}
We have
\[
c_k \leq [l(\mathfrak{a}_{s_k}):l(M')] 
\leq [l(N):l(N')]= d_k.
\]
\end{proof}

Suppose we remove assummption \eqref{assume}, so for a given $s_k$,
$J_{s_k}$ occurs in $I$ with multiplicity $n_k$, which may be greater
than 1. It follows from Remark \ref{course} that 
\[c_k^{n_k}\geq
\gon(l(J_{s_k}^{n_k}\oplus \Z \langle T_k \rangle^{n_k})^{\pi},
\]
and
from Theorem \ref{ckdk} that $c_k^{n_k}\leq d_k^{n_k}$. Informally
speaking, the likelihood of $c_k^{n_k}$ (and thus $d_k^{n_k}$) being close
to $\gon(l(J_{s_k}^{n_k}\oplus \Z\langle T_k\rangle^{n_k})^{\pi})$
decreases as $n_k$ gets larger. This is because the rationality of
$l(J_{s_k}^{n_k}\oplus \Z\langle T\rangle^{n_k})^{\pi}$ is equivalent to that of
$l(\oplus_{m'_{s_k}|m_{s_k}, m\neq
  m_{s_k}}\Z[\zeta_{m'_{s_k}}]^{n_k}\oplus\mathfrak{a}_s^{n_k} \oplus
\nolinebreak \Z^{n_k|T|})^{\pi}$, and the latter field is
rational if $\mathfrak{a}_{s_k}^{n_k}$ is a free $\Z[\pi_s]$-module, or,
equivalently \cite{kaplded}, if $\mathfrak{a}_{s_k}^{n_k}$ (ideal product inside $\Z[\zeta_{m_{s_k}}]$) is
principal. Moreover, by Section \ref{bounding}, if $\mathfrak{a}_s^{n_k}$ (direct sum) contains a free
$\Z[\zeta_{m_{s_k}}]$-module $X$ of index $d_X$, then
$l(J_{s_k}^{n_k}\oplus \Z\langle T_k\rangle^{n_k})^{\pi}$ will have
gonality at most $d_X$; we are 
guaranteed such an $X$ with $d_X \leq c_k^{n_k}$ since we can always
take $X=\I_k^{n_k}$ (direct sum) for a principal $\I_k \subseteq \mathfrak{a}_k$, a free $\Z[\zeta_{m_{s_k}}]$-module inside $\mathfrak{a}_s^{n_k}$. 

\par Fix a field $K$, and suppose the following holds:
\begin{align} \label{**}
 A=\Z/s\Z \textrm{ (so } d_1=d )\textrm{ is cyclic for an odd prime
power }s, \textrm{ and } \zeta_s \notin K. 
\end{align}
We then have the following conditional Theorem:
\begin{thm} \label{=}
Suppose \eqref{test}, \eqref{test2} and \eqref{assume} hold. Then $\gon(l(J_s)^{\pi})=c_1$. That is, $\gon(l(J_s)^{\pi}))$ is
equal to the minimum over all principal ideals $\I \subseteq
\mathfrak{a}_{s}$ of $|\mathfrak{a}_{s}:\I|$.
\end{thm}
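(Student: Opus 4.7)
The plan is to sandwich $\gon(l(J_s)^{\pi})$ between $c_1$ from both sides, with both inequalities essentially already available in the paper.

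For the upper bound $\gon(l(J_s)^{\pi})\leq c_1$, I would invoke Remark \ref{course}. Under hypothesis \eqref{**}, the set $\Omega'$ consists of the single element $s$ with multiplicity one, so $I=J_s$ and the product appearing on the right-hand side of the inequality in Remark \ref{course} collapses to a single factor equal to $\min_{\I\subseteq \mathfrak{a}_s,\,\I\text{ principal}}|\mathfrak{a}_s:\I|=c_1$. This immediately gives $\gon(l(J_s)^{\pi})\leq c_1$.

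For the lower bound $c_1\leq \gon(l(J_s)^{\pi})$, the plan is to combine Theorem \ref{ckdk} with several identifications of degrees. Since $f\in B^{*}$ is the standing assumption of this subsubsection, and \eqref{test}, \eqref{test2}, \eqref{assume} are hypothesized, Theorem \ref{ckdk} yields $c_1\leq d_1$. With only one summand ($n=1$) the product formula $[l(I''):l(I')]=\prod_{k}d_k$ collapses to $[l(I''):l(I')]=d_1$, and Figure \ref{diamond2} identifies this common value with $d=[l(I)^{\pi}:K(S)]$. Because $S=S_1$ was chosen as a maximal transcendence basis for $l(J_s)^{\pi}$ over $K$ and $l(I)^{\pi}=l(J_s)^{\pi}$ in the single-summand case, we conclude $d=\gon(l(J_s)^{\pi})$. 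Chaining these equalities gives $c_1\leq d_1=d=\gon(l(J_s)^{\pi})$, and combined with the upper bound the theorem follows.

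The main obstacle is not new mathematics but rather careful bookkeeping: one must trace through the three a priori distinct quantities $c_1$, $d_1$, and $d$ introduced across Section \ref{secbelow}, together with Figure \ref{diamond2}, to confirm that in the single-summand case they all coincide with $\gon(l(J_s)^{\pi})$. Once those identifications are pinned down, the equality is immediate from the already established Remark \ref{course} and Theorem \ref{ckdk}.
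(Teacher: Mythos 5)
Your proposal is correct and follows essentially the same route as the paper: the paper's proof is precisely the chain $\gon(l(J_s)^{\pi})=d_1$ (by construction, since $S=S_1$ is a maximal transcendence basis and the single-summand case collapses $d$ to $d_1$), $c_1\leq d_1$ from Theorem \ref{ckdk}, and $\gon(l(J_s)^{\pi})\leq c_1$ from Remark \ref{course}. Your extra bookkeeping identifying $c_1$, $d_1$, and $d$ via Figure \ref{diamond2} is exactly what the paper compresses into the phrase ``by construction.''
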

\begin{proof}
By construction, $\gon(l(J_s)^{\pi})=d_1$. From Theorem \ref{ckdk},
$c_1 \leq d_1$. On the other hand, by Remark \ref{course},
$\gon(l(J_s)^{\pi})\leq c_1$.
\end{proof}

As corollary, to Theorem \ref{=}, we have

\begin{corollary}
Let $K$ be field that is finitely generated over its prime subfield,
and suppose that \eqref{test}, \eqref{test2}, and \eqref{assume} hold.
 Then for
almost all (Dirichlet density 1) primes $p$,
\[
\gon(J_p)\geq m_p+1=[K(\zeta_p):K]+1.
\]
\end{corollary}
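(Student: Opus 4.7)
The plan is to combine Theorem~\ref{=}, which identifies $\gon(l(J_p)^{\pi})$ with the ideal-theoretic invariant $c_1(p):=\min\{|\mathfrak{a}_p:\mathfrak{I}|:\mathfrak{I}\subseteq\mathfrak{a}_p\text{ principal}\}$, with a density argument on ideal classes of cyclotomic fields.

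First, I would reduce to primes where Theorem~\ref{=} applies. Since $K$ is finitely generated over its prime subfield, $K$ contains only finitely many roots of unity, so $\zeta_p\notin K$ for all but finitely many $p$, and hypothesis~\eqref{**} holds. Theorem~\ref{=} then gives $\gon(l(J_p)^{\pi})=c_1(p)$. Any principal ideal $(\alpha)\subseteq\mathfrak{a}_p$ factors uniquely in $\Z[\zeta_{m_p}]$ as $\mathfrak{a}_p\cdot\mathfrak{b}$ with $\mathfrak{b}$ integral, $N(\mathfrak{b})=|\mathfrak{a}_p:(\alpha)|$, and $[\mathfrak{b}]=[\mathfrak{a}_p]^{-1}$ in $\Cl(\Z[\zeta_{m_p}])$. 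Thus the inequality $c_1(p)\leq m_p$ is equivalent to the inverse class $[\mathfrak{a}_p]^{-1}$ containing an integral ideal of norm at most $m_p$; call such a $p$ \emph{bad}.

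The heart of the argument is to show that bad primes form a density-zero set. The number of ideal classes in $\Cl(\Z[\zeta_m])$ representable by an integral ideal of norm $\leq m$ is bounded by the total count of such ideals, which by Dedekind's asymptotic is comparable to $\rho_m\cdot m$, where $\rho_m$ is the residue of $\zeta_{\Q(\zeta_m)}$ at $s=1$. By the class number formula, $\rho_m\sim (2\pi)^{\phi(m)/2}h_m R_m/(w_m\sqrt{|d_m|})$. Since the discriminant of $\Q(\zeta_m)$ satisfies $\sqrt{|d_m|}\sim m^{\phi(m)/2}$ up to factors of bounded primes, which dominates the archimedean factor $(2\pi)^{\phi(m)/2}$, and since $R_m$ grows only polylogarithmically, the proportion of bad classes in $\Cl(\Z[\zeta_m])$ tends to zero as $m\to\infty$. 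I would then convert this class-group proportion into a density bound on primes by applying the Chebotarev density theorem to the compositum with $K$ of the Hilbert class field of $\Q(\zeta_m)$: via Artin reciprocity, $[\mathfrak{a}_p]$ corresponds to the Frobenius of a prime above $p$, and equidistribution implies that the density of primes $p$ with $[\mathfrak{a}_p]^{-1}$ in the bad subset is small. Since $K$ is finitely generated, $m_p\to\infty$ with $p$, so primes with $m_p$ bounded contribute density zero; stratifying by the value of $m_p$ and summing yields Dirichlet density one for the good primes.

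The principal obstacle is that the ambient cyclotomic field $\Q(\zeta_{m_p})$ varies with $p$, so one cannot apply Chebotarev in a single extension; one instead needs either uniform-in-$m$ effective estimates or a careful stratification and summation over the possible values of $m_p$ (themselves determined by the splitting of $p$ in the fixed cyclotomic subextensions of $K$). Once the quantitative Chebotarev step is in hand, the remainder of the argument reduces to the standard analytic number theory of cyclotomic fields.
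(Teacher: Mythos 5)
Your reduction is fine as far as it goes: for all but finitely many $p$ one has $\zeta_p\notin K$ (a field finitely generated over its prime subfield contains only finitely many roots of unity), Theorem \ref{=} applies, and $c_1(p)\le m_p$ is indeed equivalent to $[\mathfrak{a}_p]^{-1}$ containing an integral ideal of norm at most $m_p$. But the density statement, which is the entire content of the corollary, is not established by your sketch --- and the paper does not prove it from scratch either; it quotes it. The paper's argument is two lines: every nontrivial integral ideal of $\Z[\zeta_{m_p}]$ (away from ramification, norms of prime ideals are $\equiv 1 \pmod{m_p}$) has index at least $m_p+1$, so $c_1(p)\ge m_p+1$ as soon as $\mathfrak{a}_p$ is non-principal; and the fact that $\mathfrak{a}_p$ is non-principal --- equivalently, that $K(\Z/p\Z)$ is not rational --- for a set of primes of Dirichlet density one is exactly Lenstra's Corollary 7.6, which is cited. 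You missed this elementary norm observation, and consequently set out to prove unconditionally a statement at least as strong as Lenstra's density theorem.

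Your proposed proof of that density statement does not work as written. The Dedekind asymptotic for the number of integral ideals of norm $\le X$ is useless at $X=m$: the error term, whose dependence on the field is of discriminant size, swamps the main term in that range, and in fact the ideals of norm $\le m$ in $\Z[\zeta_m]$ are just products of ramified primes and should be counted directly. The claim that the regulator $R_m$ grows only polylogarithmically is false (it grows at least exponentially in the degree), and the ratio you need to control, (number of classes met by small-norm ideals)$/h_m$, ultimately requires a lower bound on $h_m$, i.e., genuine arithmetic input. Most seriously, the Chebotarev step cannot be carried out: the class $[\mathfrak{a}_p]$ lives in $\Cl(\Z[\zeta_{m_p}])$ with $m_p$ varying with $p$, and after stratifying by $m_p=m$ each stratum contains only boundedly many primes (for $K=\Q$, exactly the single prime $p=m+1$), so there is nothing to equidistribute and no uniform-in-$m$ effective Chebotarev of the needed strength exists. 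You flag this as the ``principal obstacle,'' but it is not a technical loose end: it is precisely the difficulty that Lenstra's Corollary 7.6 circumvents by a different argument, and your outline offers no way around it. The repair is either to invoke \cite{Lenstra} together with the norm observation above (the paper's route), or to supply a complete proof of the density assertion, which the proposal does not do.
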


\begin{proof}
Lenstra \cite[Corollary 7.6]{Lenstra} shows that the set of primes for which $K(\Z/p\Z)$ is not rational
has density one. The result now follows by Theorem \ref{=} and the
fact that the minimal index of any ideal $\I$ properly contained in
$\mathfrak{a}_p$ is $m_p+1$ (recall $\mathfrak{a}_p \subset
\Z[\zeta_{m_p}]$). 
\end{proof}

\subsubsection{Remarks in the Case that $f \notin B^*$}

We close by listing a few properties that $f$ must satisfy in the case
that $f \notin B^*$. 

\par Suppose that $f$ is not a monomial in $B$. If we continue to assume
\eqref{test} and \eqref{test2}, then, as previously noted, we can write
$f=\frac{g}{h}$, with $h=u^{-1}g^{\sigma^a}$, $u \in B^{*}$.  In this
case, $f$ must have the following properties:

\begin{prop}
The element $g \in B$ cannot be written as a sum of two or fewer terms
in $B$.
\end{prop}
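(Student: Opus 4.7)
The plan is to argue by contradiction, splitting on the number of terms of $g$. If $g$ is a single monomial, then $g\in B^{*}$ because the units of the Laurent polynomial ring $B$ are precisely its monomials. Since $\sigma^{a}$ is an $l$-algebra automorphism of $B$, we have $g^{\sigma^{a}}\in B^{*}$ as well, so $h=u^{-1}g^{\sigma^{a}}\in B^{*}$ and therefore $f=g/h\in B^{*}$, contradicting the hypothesis $f\notin B^{*}$.

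Suppose instead that $g$ has exactly two terms, and write $g=m_{1}+m_{2}=m_{1}(1+n)$ where $n:=m_{2}/m_{1}\in B^{*}$. If $n$ were a constant in $l^{*}$, then $g$ would collapse to a single monomial and we would be in the previous case; so assume $n$ is a nonconstant monomial, whence $1+n$ is a non-unit of $B$. Applying $\sigma^{a}$, one gets $g^{\sigma^{a}}=m_{1}^{\sigma^{a}}(1+n^{\sigma^{a}})$, and hence
\[
f=\frac{g}{h}=\mu\cdot\frac{1+n}{1+n^{\sigma^{a}}},\qquad \mu:=\frac{u\,m_{1}}{m_{1}^{\sigma^{a}}}\in B^{*}.
\]
I would then distinguish cases by the relation between $n$ and $n^{\sigma^{a}}$. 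If $n^{\sigma^{a}}=n$, then $1+n$ is a non-unit common factor of $g$ and $h$, contradicting coprimality. If $n^{\sigma^{a}}=n^{-1}$, then $1+n^{\sigma^{a}}=n^{-1}(1+n)$, so $(1+n)/(1+n^{\sigma^{a}})=n$ and $f=\mu n\in B^{*}$, contradicting $f\notin B^{*}$.

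The remaining subcase $n^{\sigma^{a}}\notin\{n,n^{-1}\}$ is where the main obstacle lies: the identity $f^{\sigma^{a}}=f^{-1}$ together with coprimality of $g$ and $h$ is by itself consistent with $f=\mu(1+n)/(1+n^{\sigma^{a}})\notin B^{*}$ in the ambient field $l(N)$. To rule this out I would invoke the stronger structural constraints on $f$, namely that (a) $f$ generates a rank-one $\Z[\zeta_{w}]$-module and so satisfies the full cyclotomic relation $f^{\Phi_{w}(\sigma)}=1$, which is strictly finer than $f^{\sigma^{a}}=f^{-1}$, and (b) this $\Z[\zeta_{w}]$-module sits inside the specific sub-$\pi$-module $N'$ of $l(N)^{*}$ produced by Lenstra's isomorphism \cite[Corollary 2.5]{Lenstra}, whose generators are highly constrained. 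Making precise the incompatibility between these structural constraints and the binomial form $f=\mu(1+n)/(1+n^{\sigma^{a}})$ is the principal difficulty of the proof.
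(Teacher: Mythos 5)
Your opening reduction and the two subcases you do treat are correct: if $g$ is a single term then $f\in B^{*}$, if $n^{\sigma^{a}}=n$ then $1+n$ is a common non-unit factor of $g$ and $h$, and if $n^{\sigma^{a}}=n^{-1}$ then $f=\mu n\in B^{*}$. But the proof is not complete, and you say so yourself: the subcase $n^{\sigma^{a}}\notin\{n,n^{-1}\}$ is left entirely open, and the appeal to ``$f$ satisfies the full relation $f^{\Phi_{w}(\sigma)}=1$'' and ``$M$ sits inside $N'$'' is a description of hoped-for constraints, not an argument. (A smaller issue: since the units of $B$ are $l^{*}$ times the monomials, the natural dichotomy is $n^{\sigma^{a}}$ equal to $n$ or $n^{-1}$ \emph{up to a scalar in} $l^{*}$, and your two good subcases should be checked in that slightly more general form.) As written, you have proved the proposition only under an unjustified hypothesis on how $\sigma^{a}$ moves the monomial $n$.

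The paper's proof closes exactly this gap by using more than the relation $f^{\sigma^{a}}=f^{-1}$: it uses that $\sigma^{a}$ (with $a=w/2$, $w$ even) inverts the monomials themselves, so that $h$ is, up to a unit, obtained from $g=A+B$ by inverting each term, i.e.\ $h\sim A^{-1}+B^{-1}$. Then the elementary identity $\frac{A+B}{A^{-1}+B^{-1}}=AB$ (verified in the paper with the $A^{\pm},B^{\pm}$ bookkeeping so as to stay inside $B=l[y_{0}^{\pm1},\ldots,y_{s_k-1}^{\pm1}]$) shows $f=u\,AB$ is a unit, a contradiction. In your notation this is precisely your subcase $n^{\sigma^{a}}=n^{-1}$ (up to a scalar); the point of the paper's argument is that the troublesome third case never has to be confronted, because the inversion is taken as a property of the action of $\sigma^{a}$ on monomials, not deduced term-by-term from $f^{\sigma^{a}}=f^{-1}$. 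So the missing idea in your write-up is this monomial-inversion input (or some substitute that disposes of $n^{\sigma^{a}}\notin\{n,n^{-1}\}$); without it your case analysis does not yield the proposition.
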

\begin{proof}
Since $\sigma$ acts additively on monomials and
$f=\frac{ug}{g^{\sigma^a}}$ is assumed to not be a monomial, we may
assume that $g$ can be written as $A+B$ is a sum of two monomials. If $|A|=\prod
y_i^{a_i}$, define $|A|=\prod y_i^{|a_i|}$. Furthermore, define
$A^+=\prod_{a_i>0}y_i^{a_i}$, and $A^-=\prod_{a_i<0}y_i^{-a_i}$, so
that $|A|=A^+A^-$. Likewise, define $|B|, B^+, B^-$.
Then
\begin{equation}
f=u\frac{g}{h}=u\frac{|A||B|}{|A||B|}\cdot\frac{A+B}{A^{-1}+B^{-1}}=u\frac{(A^+)^2|B|+(B^+)^2|A|}{(A^-)^2|B|+(B^-)^2|A|}=u\frac{A^+B^+(A^+B^-+B^+A^-)}{A^-B^-(A^-B^++A^+B^-)},
\end{equation}
which is a monomial, a contradiction. 
\end{proof}

\begin{prop} \label{sigunit}
For every $b, 1 \leq b \leq w-1$, $\frac{g}{g^{\sigma^b}}\notin B^*$. 
\end{prop}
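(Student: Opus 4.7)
The plan is to derive a contradiction from the assumed unit relation on $g$ by propagating it to $f$ and then ruling it out via the fact that $l(N)^{*}/B^{*}$ is torsion-free. Suppose for contradiction that $g^{\sigma^{b}}=vg$ for some $v\in B^{*}$ and some $b$ with $1\le b\le w-1$. The relation $h=u^{-1}g^{\sigma^{a}}$ established above (with $u\in B^{*}$ and $a=w/2$) gives the direct computation
\[
h^{\sigma^{b}} \;=\; u^{-\sigma^{b}} g^{\sigma^{a+b}} \;=\; u^{-\sigma^{b}}\bigl(g^{\sigma^{b}}\bigr)^{\sigma^{a}} \;=\; u^{-\sigma^{b}} v^{\sigma^{a}} g^{\sigma^{a}} \;=\; \bigl(u^{1-\sigma^{b}} v^{\sigma^{a}}\bigr) h,
\]
so $h^{\sigma^{b}}/h\in B^{*}$, and hence $f^{\sigma^{b}}/f = (g^{\sigma^{b}}/g)(h/h^{\sigma^{b}}) \in B^{*}$.

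I would then analyze the subgroup $M\subseteq l(N)^{*}$: by construction it is free of rank one over $\Z[\zeta_{w}]$ with generator $f$, so as a $\Z[\pi_{s_{k}}]$-module it is isomorphic to $\Z[\zeta_{w}]$ with $\sigma$ acting as multiplication by $\zeta_{w}$. The intersection $J:=M\cap B^{*}$ is stable under $\pi$ (since both $M$ and $B^{*}$ are), hence is a $\Z[\pi_{s_{k}}]$-submodule of $M$, i.e., an ideal of $\Z[\zeta_{w}]$ under the identification $M\cong \Z[\zeta_{w}]$. The crucial observation is that $B$ is a Laurent polynomial ring over the field $l$, hence a UFD; this forces $l(N)^{*}/B^{*}$ to be the free abelian group on the primes of $B$ up to associates, and in particular torsion-free. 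Consequently $\Z[\zeta_{w}]/J$, embedded in $l(N)^{*}/B^{*}$, is torsion-free. But $\Z[\zeta_{w}]$ is Dedekind, so any nonzero proper ideal of it has finite quotient and thus nonzero torsion; we conclude that either $J=0$ or $J=\Z[\zeta_{w}]$, and the latter is ruled out by the standing assumption $f\notin B^{*}$.

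Hence $J=0$ and the map $M\hookrightarrow l(N)^{*}/B^{*}$ is injective. Under the identification $M\cong\Z[\zeta_{w}]$, the element $f^{\sigma^{b}}/f$ corresponds to $\zeta_{w}^{b}-1$, which is nonzero for $1\le b\le w-1$; therefore $f^{\sigma^{b}}/f\notin B^{*}$, contradicting the conclusion of the first paragraph. The main subtlety I anticipate is justifying that $\sigma$ preserves $B^{*}$ (needed to see that $J$ is a $\Z[\zeta_{w}]$-ideal), which follows from the description of the action of $\pi$ on each summand of $N$ as permuting monomials, so that $\sigma$ sends the monoid of Laurent monomials in the $y_{i}$ to itself.
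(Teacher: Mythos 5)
Your argument is correct, and its skeleton is the same as the paper's: from $g^{\sigma^{b}}=vg$ you deduce that $f^{\sigma^{b}}/f$ lies in $B^{*}$ (your computation is the reciprocal of the paper's displayed identity $f/f^{\sigma^{b}}=v^{\sigma^{a}}u/(vu^{\sigma^{b}})$), and you then derive a contradiction from the fact that $f^{\sigma^{b}}/f$ is a nontrivial element of the $\Z[\zeta_{w}]$-module $M$ generated by $f$ while $M$ meets $B^{*}$ trivially. Where you genuinely differ is in the justification of that last triviality. The paper appeals to Lemma \ref{nonontriv} ($M\cap B=1$), proved by a finite-index argument combined with counting irreducible factors of powers of $f$; you instead prove the (sufficient) statement $M\cap B^{*}=1$ structurally: $M\cap B^{*}$ is $\sigma$-stable (since $\sigma$ acts on $l$ and permutes monomials, it preserves $B$ and $B^{*}$), hence an ideal of $\Z[\zeta_{w}]\cong M$; a nonzero ideal has finite index, whereas $M/(M\cap B^{*})$ embeds in the divisor group $l(N)^{*}/B^{*}$ of the UFD $B$, which is free abelian and in particular torsion-free; so $M\cap B^{*}$ is either $0$ or all of $\Z[\zeta_{w}]$, and the latter would put $f$ in $B^{*}$, against the standing hypothesis of this subsection. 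This route buys a cleaner and more airtight argument than the factor-counting step in the paper's Lemma \ref{nonontriv}, and it isolates exactly what the proposition needs, since the element produced by the unit computation is itself a unit; what it does not give is the paper's slightly stronger statement $M\cap B=1$, which, however, is not required here.
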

First, we have a lemma.
\begin{lemma} \label{nonontriv}
Let $M$ be the $\Z[\zeta_w]$-module generated by $f$. Then $M\cap B=1$.
\end{lemma}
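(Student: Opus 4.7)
The plan is to analyze $M \cap B$ through divisor theory in the Laurent polynomial UFD $B = l[y_0^{\pm 1}, \dots, y_{s_k-1}^{\pm 1}]$.

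First I would reduce $M \cap B$ to $M \cap B^{*}$: for any $m = f^{\alpha} \in M \cap B$, the field automorphism $\sigma^{a}$ (with $a = w/2$) preserves $B$ and sends $m$ to $\sigma^{a}(f^{\alpha}) = f^{\alpha \sigma^{a}} = f^{-\alpha} = m^{-1}$, using the already established fact that $\sigma^{a}$ acts as $-1$ on the $\Z[\zeta_w]$-module $M$. Hence $m^{-1} \in B$ as well, so $m \in B^{*}$. It therefore suffices to show that the only $\alpha \in \Z[\zeta_w]$ with $f^{\alpha} \in B^{*}$ is $\alpha = 0$.

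Next I would pass to the divisor group $\operatorname{Div}(B) = \bigoplus_{P} \Z \cdot [P]$ of the UFD $B$. Using the structure $f = ug/g^{\sigma^{a}}$ with $u \in B^{*}$ and $g, g^{\sigma^{a}}$ coprime, and observing that $g \notin B^{*}$ (else $f \in B^{*}$), set $E := \operatorname{div}(f) = (1 - \sigma^{a})\operatorname{div}(g)$. Disjointness of the supports of $\operatorname{div}(g)$ and $\sigma^{a}\operatorname{div}(g)$, guaranteed by coprimality, gives $E \neq 0$. Since $\operatorname{div}\colon l(N)^{*} \to \operatorname{Div}(B)$ is $\Z[\sigma]$-equivariant, $\operatorname{div}(f^{\alpha}) = \alpha \cdot E$ for any lift of $\alpha$ to $\Z[\sigma]$, and the condition $f^{\alpha} \in B^{*}$ becomes $\alpha E = 0$ in $\operatorname{Div}(B)$.

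The crux is then to show $\operatorname{Ann}_{\Z[\zeta_w]}(E) = 0$. Because $M$ is by hypothesis a free rank-$1$ module over $\Z[\zeta_w] = \Z[\sigma]/(\Phi_w(\sigma))$, we have $f^{\Phi_w(\sigma)} = 1$, and applying $\operatorname{div}$ yields $\Phi_w(\sigma) \cdot E = 0$. Thus the cyclic $\Z[\sigma]$-submodule $\Z[\sigma] \cdot E$ is a quotient of $\Z[\zeta_w]$, namely $\Z[\zeta_w]/\operatorname{Ann}_{\Z[\zeta_w]}(E)$. If this annihilator were a nonzero ideal, then since $\Z[\zeta_w]$ is the ring of integers of a number field, the quotient would be a finite abelian group. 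But $\Z[\sigma] \cdot E$ sits inside the free, hence torsion-free, abelian group $\operatorname{Div}(B)$, so it has no nonzero finite subgroups, forcing $\Z[\sigma] \cdot E = 0$ and contradicting $E \neq 0$. Therefore $\operatorname{Ann}_{\Z[\zeta_w]}(E) = 0$, and $f^{\alpha} \in B$ forces $\alpha = 0$, i.e. $f^{\alpha} = 1$.

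The one step meriting care is the identification $\Phi_w(\sigma) \cdot E = 0$, which relies critically on the $\Z[\zeta_w]$-module structure of $M$ built into the choice of $f$; once that is in hand, the conclusion is just the standard fact that a torsion-free abelian group contains no nontrivial finite subgroup.
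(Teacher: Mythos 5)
Your proof is correct, and it takes a genuinely different route from the paper's. The paper argues by contradiction through module indices: a non-trivial $v\in M\cap B$ would generate a finite-index $\Z[\zeta_w]$-submodule $\langle v\rangle$ of $M$, so $M$ would be covered by finitely many cosets $\delta_i\langle v\rangle$, and this is played off against the fact that the powers $f^n=u^ng^n/(g^{\sigma^a})^n$ have numerators and denominators built from arbitrarily many irreducible factors of $B$ -- an elementary factor-counting argument in the UFD $B$. You instead linearize the problem via the divisor homomorphism $\operatorname{div}\colon l(N)^*\to\operatorname{Div}(B)$, whose kernel is exactly $B^*$: after the reduction $M\cap B=M\cap B^*$ (using that $\sigma^{w/2}$ acts on $M$ as inversion while preserving $B$ -- a step the paper's terse proof does not make explicit but which its counting argument also implicitly relies on, since elements of $\langle v\rangle$ need to have controlled factorizations), the statement becomes $\operatorname{Ann}_{\Z[\zeta_w]}(\operatorname{div} f)=0$, which you get from $f^{\Phi_w(\sigma)}=1$, the torsion-freeness of the free abelian group $\operatorname{Div}(B)$, and the fact that nonzero ideals of $\Z[\zeta_w]$ have finite index. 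Both arguments ultimately rest on the UFD structure of $B$ and the rank-one free $\Z[\zeta_w]$-structure of $M$, but yours replaces the growth-of-factors estimate by a clean structural statement (injectivity of $\operatorname{div}$ on $M$), which is arguably tighter and fills in the unit-reduction detail; the paper's version is shorter and avoids introducing the divisor group.
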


\begin{proof}
Suppose $M$ contained a non-trivial element $v \in B$. Then $v$ generates a
$\Z[\zeta_w]$-module $<v>$ inside $M$, and $M/<v>$ is finite, meaning
that there exist $\delta_1, \ldots, \delta_n \in M$ such that ever
element of $M$ is of the form $\delta_iv'$, where $v' \in <v>$. This
is impossible, however, since powers of $f$ give elements of $M$ whose
numerator and denominator are both a product of arbitrarily many
irreducible elements of $B$. 
\end{proof}

\begin{proof}
Suppose we had $g^{\sigma^b}=vg$, for some $v \in B^*$, $1 \leq b \leq
w-1$. We have $f=\frac{ug}{g^{\sigma^a}}$, so 
\[
\frac{f}{f^{\sigma^b}}=\frac{v^{\sigma^a}u}{v u^{\sigma^b}}:=\Lambda.
\]
We cannot have $\Lambda=1$ since the lowest positive power of $\sigma$
fixing $f$ is $w$. But $\Lambda \neq 1$ contradicts Lemma
\ref{nonontriv}. 
\end{proof}

\begin{corollary}
The element $g$ cannot be an irreducible element of $B$. 
\end{corollary}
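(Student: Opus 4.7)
The plan is to derive a contradiction from the assumed irreducibility of $g$ by analyzing the prime factorization, in the UFD $B$, forced by the annihilator relation $\Phi_w(\sigma)\cdot f = 1$ that $f$ must satisfy in $M\cong\Z[\zeta_w]$.

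First I would observe that if $g$ is irreducible then so is every Galois conjugate $p_b := g^{\sigma^b}$ (since $\sigma$ is a ring automorphism of $B$), and Proposition \ref{sigunit} then guarantees that $p_0, p_1, \dots, p_{w-1}$ are pairwise non-associate irreducibles of $B$. Next, writing $f = u\, p_0/p_a$ with $u \in B^{*}$ and $a = w/2$, one computes $f^{\sigma^b} = u^{\sigma^b}\, p_b/p_{b+a}$. Hence for any $\alpha = \sum_{b} c_b \sigma^b \in \Z[\pi]$, the $p_i$-adic valuation of the multiplicative product $\alpha \cdot f = \prod_b (f^{\sigma^b})^{c_b}$ is exactly $c_i - c_{i-a}$.

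The crucial step is to apply this with $\alpha = \Phi_w(\sigma)$. Since $M \cong \Z[\zeta_w]$ as a $\Z[\pi]$-module under $f \mapsto 1$, and $\Phi_w(\zeta_w) = 0$, we get $\Phi_w(\sigma) \cdot f = 1$ multiplicatively. This forces every $c_i - c_{i-a}$ to vanish; taking $i = 0$ yields $\Phi_w(0) = c_{w/2}$. Standardly $\Phi_w(0) = 1$ for $w \geq 2$, while $c_{w/2} = 0$ as soon as $\deg \Phi_w = \phi(w) < w/2$, i.e.\ whenever $w$ is not a power of two. In that case we obtain the contradiction $1 = 0$ and conclude that $g$ cannot be irreducible.

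The hard part will be the residual case $w = 2^t$, in which $\Phi_w(\sigma) = \sigma^{w/2} + 1$, so the identity $c_i = c_{i-a}$ holds tautologically and the $\Phi_w$-valuation test becomes empty. Since the annihilator of $f$ in $\Z[\pi]$ is then exactly the principal ideal $(\sigma^{w/2}+1)$, no element of the annihilator alone distinguishes irreducible $g$ from reducible $g$. Closing that gap seems to require extra input --- either restricting attention to the non-principal case for $\mathfrak{a}_{s_k}$ (which forces $\Z[\zeta_w]$ to have nontrivial class group and excludes the smallest powers of two, where $\Z[\zeta_w]$ is a PID) or combining Proposition \ref{sigunit} for various $b$ with the explicit description of $u \in B^{*}$ arising from $N = \mathfrak{a}_{s_k}\oplus\cdots\oplus\Z^{|T_k|}$.
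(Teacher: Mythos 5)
Your argument is essentially the paper's own: both rest on the annihilator relation $f^{\tau}=1$ for $\tau=\Phi_w(\sigma)$, unique factorization in $B$, and Proposition \ref{sigunit} to guarantee that the conjugates $g^{\sigma^b}$ are pairwise non-associate irreducibles. Your valuation bookkeeping (the exponent of $g^{\sigma^i}$ in $f^{\tau}$ is $c_i-c_{i-a}$, indices mod $w$, $a=w/2$), and the resulting contradiction $1=\Phi_w(0)=c_0=c_{w/2}=0$ whenever $\phi(w)<w/2$, is a sharpened version of the paper's one-line deduction that unique factorization would force $g$ to be associate to some $g^{\sigma^b}$. So as far as it goes, your proof is sound and follows the paper's route.

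The gap you flag, however, is genuine: when $w=2^t$ one has $\Phi_w(\sigma)=\sigma^{w/2}+1$, and with $f=ug/g^{\sigma^a}$ one computes $f^{\tau}=uu^{\sigma^a}$ identically, so the relation $f^{\tau}=1$ constrains only the unit $u$ and places no restriction whatever on $g$; your proposal does not close this case, and the fixes you sketch are not developed (non-principality of $\mathfrak{a}_{s_k}$ does not rule out all $2$-power $w$, and $2$-power $w$ genuinely occurs under \eqref{test} and \eqref{test2}, e.g. $K=\Q$, $s_k=5$ gives $w=4$, and any quadratic $K(\zeta_{s_k})/K$ gives $w=2$). You should be aware, though, that the paper's own proof is the same cancellation argument and silently takes exactly the step that fails here: the claim ``since $B$ is a UFD, $g=g^{\sigma^b}$ up to units'' is valid only when the coefficient pattern of $\Phi_w$ is not invariant under the shift by $w/2$, i.e.\ only when $w$ is not a power of two. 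So your more careful analysis has not merely left a case open; it has isolated a case that the published argument does not cover either, and settling it would require input beyond the annihilator relation (for instance, global information about how $f$ sits inside $N'$ and about the unit $u$), not just a rearrangement of the present proof.
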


\begin{proof}
Suppose $g \in B$ were irreducible. As mentioned in Section
\ref{case1}, if $\tau$ is the $w$th cyclotomic polynomial in
$\Z[\sigma]$, then $f^{\tau}=1$. Since $B$ is a UFD, this would imply
that up to units, $g=g^{\sigma^b}$, for some $b$, $1 \leq b \leq w-1$,
contradicting Proposition \ref{sigunit}.
\end{proof}

\begin{corollary}
Every non-trivial element of $m \in M$ is of the form $\frac{e}{e'}$, where
$e$ and $e'$ are coprime elements of $B$, each a product of $a_m$
irreducible elements of $B$, with $a_m \geq 2$.
\end{corollary}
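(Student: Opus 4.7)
My plan is to reduce to the preceding Corollary by repeating its argument with an arbitrary nontrivial $m \in M$ in place of the generator $f$.

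Since $B$ is a UFD, the element $m \in l(N)^*$ admits a unique-up-to-units reduced form $m = e/e'$ with $e, e' \in B$ coprime; I let $a_m$ denote the number of irreducible factors of $e$. First I would show that $e'$ has exactly $a_m$ irreducible factors as well. The key is that $\sigma^{w/2}$ acts as inversion on $M$, since $\zeta_w^{w/2} = -1$ (using that $w$ is even by assumption \eqref{test}), so $m^{\sigma^{w/2}} = m^{-1}$ gives $e^{\sigma^{w/2}}/(e')^{\sigma^{w/2}} = e'/e$. Both sides are reduced because $\sigma$ is a ring automorphism of $B$ preserving coprimality, so uniqueness of the reduced form forces $e' \sim e^{\sigma^{w/2}}$ up to a unit in $B$. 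Since $\sigma$ permutes irreducibles of $B$ up to units, $e'$ indeed has $a_m$ irreducible factors.

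It then remains to verify $a_m \geq 2$. The case $a_m = 0$ gives $e, e' \in B^*$, whence $m \in B^* \subseteq B$, and Lemma \ref{nonontriv} forces $m = 1$, contradicting nontriviality. For $a_m = 1$, $e$ would be irreducible in $B$, and I would derive a contradiction by re-running the proof of the preceding Corollary with $(m, e, e')$ in the roles of $(f, g, h)$. The required inputs are: (i) $m^{\sigma^{w/2}} = m^{-1}$ (shown above); (ii) $m^{\Phi_w(\sigma)} = 1$, which holds because $\Phi_w(\zeta_w) = 0$ and $m$ lies in the $\Z[\zeta_w]$-module $M$; (iii) the smallest positive integer $b$ with $m^{\sigma^b} = m$ equals $w$, for if we write $m = f^{\alpha}$ with $\alpha \in \Z[\zeta_w] \setminus \{0\}$, then $m^{\sigma^b} = m$ reduces to $(\zeta_w^b - 1)\alpha = 0$ in the domain $\Z[\zeta_w]$, forcing $w \mid b$; and (iv) the analog $\langle m \rangle \cap B = \{1\}$ of Lemma \ref{nonontriv}, which is immediate from $\langle m \rangle \subseteq M$. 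With (i)--(iv) in hand, the arguments of Proposition \ref{sigunit} and of the preceding Corollary apply verbatim to $(m, e, e')$, contradicting the irreducibility of $e$.

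The main obstacle is conceptual rather than computational: one must check that every ingredient of the preceding Corollary and of Proposition \ref{sigunit} continues to hold when the chosen generator $f$ is replaced by an arbitrary nontrivial $m \in M$. I anticipate that beyond verifying (i)--(iv), no new ideas are needed.
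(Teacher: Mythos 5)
Your proposal is correct and takes essentially the same route as the paper: the paper's proof likewise just re-runs the arguments given for $f$ on an arbitrary non-trivial $m\in M$, the key point being that $m$ generates a free $\Z[\zeta_w]$-module (your input (iii)), so that $m$ has the form $u\,e/e^{\sigma^{a}}$ with $e$ neither a unit nor irreducible. Your write-up merely makes explicit the inputs (i)--(iv) that the paper invokes implicitly via ``as shown in the case $m=f$.''
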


\begin{proof}
Any non-trivial element $m \in M$ generates a free
$\Z[\zeta_w]$-module, so as shown in the case $m=f$, $m$ must be of
the form $\frac{ug}{g^{\sigma^a}}$ with $u \in B^*$ and $g$ a product
of at least two irreducible elements in $B$. 
\end{proof}

\end{document}